\renewcommand{\vec}{\mathbf}
\newtheorem{theorem}{Theorem}
\newtheorem{lemma}[theorem]{Lemma}
\theoremstyle{definition}
\newtheorem{remark}[theorem]{Remark}
\def\zbar{\bar z}
\def\lambdabar{\bar\lambda}
\def\bflambda{\boldsymbol{\lambda}}
\def\bfmu{\boldsymbol{\mu}}
\def\ii{\mathrm{i}}
\DeclareMathOperator{\Tr}{Tr}
\DeclareMathOperator{\sect}{sect}
\def\Nset{\mathbb{N}}
\def\Rset{\mathbb{R}}
\def\Cset{\mathbb{C}}
\def\calI{\mathcal I}
\let\epsilon=\varepsilon
\newcommand\Zbar{\overline Z}
\newcommand{\cupdot}{\mathbin{\mathaccent\cdot\cup}}
\newcommand\wt{\mathrm{wt}}
\newcommand\vol{\mathop{\mathrm{vol}}}
\renewcommand\Im{\mathop\mathrm{Im}\nolimits}
\renewcommand\Re{\mathop\mathrm{Re}\nolimits}
\tikzstyle{vertex} = [draw, shape=circle,minimum size=1mm,  inner sep=1pt, fill]
\title[A multivariate zero-free region\ldots]{Zero-free regions for the independence polynomial on restricted graph classes}
\author{Mark Jerrum}
\email{m.jerrum@qmul.ac.uk}
\author{Viresh Patel}
\email{v.patel@qmul.ac.uk}
\address{School of Mathematical Sciences, Queen Mary, University of London, Mile End Road, London E1~4NS.}
\date{\today}                                        
\begin{document}

\subjclass{05C31, 05C69, 05C75, 82B20}

\begin{abstract}
Generalising the Heilman-Lieb Theorem from statistical physics,
Chudnovsky and Seymour [\textit{J.\ Combin.\ Theory Ser.~B}, 97(3):350--357] showed that the univariate independence polynomial of any claw-free graph has all of its zeros on the negative real line. In this paper, we show that for any fixed subdivided claw $H$ and any $\Delta$, there is an open set $F \subseteq \mathbb{C}$ containing $[0, \infty)$ such that the independence polynomial of any $H$-free graph of maximum degree $\Delta$ has all of its zeros outside of $F$. We also show that no such result can hold when $H$ is any graph other than a subdivided claw or if we drop the maximum degree condition.

We also establish zero-free regions for the multivariate independence polynomial of $H$-free graphs of bounded degree when $H$ is a subdivided claw. The statements of these results are more subtle, but are again best possible in various senses.
\end{abstract}

\maketitle  

\section{Introduction}


The hard-core model, a model for the behaviour of gases in equilibrium, is intensely studied across various scientific disciplines, including statistical physics, probability, combinatorics, and theoretical computer science. In the latter two fields, the model is described using the language of independent sets in graphs --- the perspective adopted here. Central to this study is the model's partition function, also known as the independence polynomial, which captures crucial information about both the model and its underlying graph.

The (multivariate) hard-core model is specified by an underlying graph $G=(V,E)$ together with (complex) weights called \emph{fugacities} on the vertices $\boldsymbol{\lambda} = (\lambda_v)_{v \in V} \in \mathbb{C}^V$. For each subset of vertices $S \subseteq V$, we write $\lambda_{S} = \prod_{v \in S}\lambda_v$ and we write $\calI(G)$ for the set of all independent sets in~$G$ (i.e., for $S \subseteq V$, $S \in \calI(G)$ if and only if $uv \notin E$ for all $u,v \in S$).  Then the \emph{partition function} of the hard-core model with respect to $G$ and  $(\lambda_v)_{v \in V}$ is given by
$$Z_{G,\boldsymbol{\lambda}} = \sum_{S\in\calI(G)} \lambda_S.$$ 
We can equivalently think of $Z_{G,\boldsymbol{\lambda}}$ as a multivariate polynomial in the (complex) variables $(\lambda_v)_{v \in V}$ called the multivariate independence polynomial of $G$ and if we set all variables equal so that $\lambda_v = \lambda$ for all $v$, we obtain the univariate independence polynomial $Z_{G,\lambda}$.  


This paper presents generalisations of the classical Heilmann-Lieb Theorem \cite{HeilmannLieb}, a foundational result in statistical physics concerning the (absence of) zeros of the partition function of the monomer-dimer model. The monomer-dimer model is typically described in terms of matchings in graphs, but for our purposes, we describe it here equivalently as a special case of the hard-core model. Given a graph $H= (U,F)$, the line graph of $H$ is denoted $L(H)$ and is defined by $L(H) = (U',F')$ where $U' = F$ and $e,e' \in U' = F$ are adjacent in $L(H)$ if $e, e'$ are incident in $H$.  Note that there is a natural correspondence between matchings in~$H$ and independent sets in $L(H)$.  The Heilmann-Lieb Theorem significantly restricts the zeros of the univariate independence polynomial of line graphs.

\begin{theorem}[Heilman-Lieb \cite{HeilmannLieb}]
    If $G$ is a line graph (i.e., $G = L(H)$ for some $H$) then $Z_{G, \lambda} \not= 0$ for all $\lambda \in \mathbb{C} \setminus (- \infty, 0)$, i.e., all roots of $Z_{G, \lambda}$ are negative reals.
    \end{theorem}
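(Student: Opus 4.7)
The plan is to reinterpret $Z_{G,\lambda}$ as the matching polynomial of $H$, where $G = L(H)$. Under the natural bijection between independent sets of $L(H)$ and matchings of $H$, one has
$$Z_{G, \lambda} = M_H(\lambda) := \sum_{k \geq 0} m_k(H) \, \lambda^k,$$
where $m_k(H)$ counts matchings of size $k$ in $H$. Since the coefficients $m_k(H)$ are non-negative integers with $m_0(H) = 1$, clearly $M_H(\lambda) > 0$ for every $\lambda \geq 0$, so any zero of $M_H$ must lie in $\mathbb{C} \setminus [0, \infty)$. The substantive content of the theorem is that every such zero is in fact a negative real.

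To establish this, I would start from the vertex recursion
$$M_H(\lambda) = M_{H-v}(\lambda) + \lambda \sum_{u \in N(v)} M_{H-u-v}(\lambda),$$
obtained by conditioning on how a fixed vertex $v$ is used in a matching. The natural inductive invariant, stronger than just reality of zeros, is the \emph{interlacing property}: for every vertex $v$ of $H$, the zeros of $M_H$ and of $M_{H-v}$ are all real, non-positive, and strictly interlace. A sign-change argument applied to the recursion at the zeros of $M_{H-v}$ forces $M_H$ to have $|V(H)|-1$ real zeros interlacing those of $M_{H-v}$; a degree count locates the remaining zero and propagates the inductive step. A more conceptual alternative avoids this bookkeeping via Godsil's path-tree identity: for any $v$, the polynomial $M_H(\lambda)$ divides $M_{T(H,v)}(\lambda)$, where $T(H,v)$ is the tree whose vertices are the non-backtracking walks in $H$ starting at $v$, with the obvious edge relation. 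This reduces the reality of zeros to the case where $H$ is a tree $T$, and for a tree one verifies by direct manipulation that $x^{|V(T)|}\, M_T(-x^{-2}) = \phi_T(x)$, the characteristic polynomial of the (real symmetric) adjacency matrix of $T$. Thus the zeros of $M_T$ are of the form $-\mu^{-2}$ for $\mu$ a nonzero eigenvalue of the adjacency matrix, hence real and negative.

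The main obstacle is justifying the inductive interlacing step — keeping strict interlacing intact through the recursion while simultaneously controlling the auxiliary terms $M_{H-u-v}$ for different neighbours $u$ of $v$ — or, in the alternative route, setting up Godsil's path-tree identity through the requisite combinatorial bijection between matchings of $H$ and certain matchings in the path-tree. By contrast, the reformulation of $Z_{G,\lambda}$ as a matching polynomial and the non-negativity of its coefficients are formal observations, so it is the reality of the zeros that constitutes the genuine work.
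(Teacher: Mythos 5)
The paper does not prove this theorem; it is quoted as a classical result and attributed to Heilmann--Lieb, so there is no in-paper argument to compare against. Your proposal is a correct outline of the two standard proofs from the literature (interlacing via the vertex recursion, or Godsil's path-tree reduction to the characteristic polynomial of a tree). One bookkeeping caveat: the degree count ``$|V(H)|-1$ real zeros'' only makes sense for the signed polynomial $\mu(H,x)=x^{|V(H)|}M_H(-x^{-2})$, not for $M_H(\lambda)$ itself, whose degree is the maximum matching size; the interlacing induction should be run on $\mu$ and the conclusion about $M_H$ recovered via the substitution $\lambda=-x^{-2}$.
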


It is a well-known fact (and easy to check) that if $G$ is a line graph, then it is claw free, that is, $G$ does not contain the claw (the first graph depicted in Figure~\ref{fig:stars}) as an induced subgraph. The class of claw-free graphs is considerably richer than the class of line graphs: indeed there has been considerable effort (see e.g.\ \cite{ChudSeyStructure}) to give a constructive description of claw-free graphs from line graphs.
Chudnovsky and Seymour \cite{ChudnovskySeymour} generalised the Heilmann-Lieb Theorem to claw-free graphs.

\begin{figure}
\begin{tikzpicture}[scale=0.7, semithick]
   \draw (-0.5,-0.87) node [vertex] (a) {};
   \draw (-0.5,0.87) node [vertex] (b) {};
   \draw (0,0) node [vertex] (c) {};
   \draw (1,0) node [vertex] (d) {};
   \draw (a) -- (c) -- (d);
   \draw (b) -- (c);
   
   \draw (2,-0.87) node [vertex] (fa) {};
   \draw (2,0.87) node [vertex] (fb) {};
   \draw (2.5,0) node [vertex] (fc) {};
   \draw (3.5,0) node [vertex] (fd) {};
   \draw (4.5,0) node [vertex] (fe) {};
   \draw (fa) -- (fc) -- (fd) -- (fe);
   \draw (fb) -- (fc);

  \draw (6.5,0) node [vertex] (ea) {};
   \draw (5.5,1) node [vertex] (eb) {};
   \draw (5.5,0) node [vertex] (ec) {};
   \draw (5.5,-1) node [vertex] (ed) {};
   \draw (6.5,-1) node [vertex] (ee) {};
   \draw (6.5,1) node [vertex] (ef) {};
   \draw (ea) -- (ec) -- (ed) -- (ee);
   \draw (ec) -- (eb) -- (ef);

\end{tikzpicture}
\caption{The claw, $S_{1,1,1}$; the fork, $S_{1,1,2}$; and the E, $S_{1,2,2}$.}
\label{fig:stars}
\end{figure}
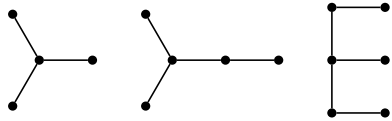
\begin{theorem}[Chudnovsky-Seymour \cite{ChudnovskySeymour}]
    If $G$ is a claw-free graph then $Z_{G, \lambda} \not= 0$ for all $\lambda \in \mathbb{C} \setminus (- \infty, 0)$, i.e., all roots of $Z_{G, \lambda}$ are negative reals.
\end{theorem}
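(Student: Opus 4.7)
The plan is to adapt the classical Heilmann-Lieb strategy from the line-graph case to the claw-free setting, using induction on $|V(G)|$ together with a real-root interlacing argument. The basic tool is the standard deletion identity, valid for every vertex $v$:
$$Z_{G,\lambda} = Z_{G-v,\lambda} + \lambda\, Z_{G-N[v],\lambda},$$
obtained by splitting $\calI(G)$ according to whether $v \in S$ or not. Note also that $Z_{G,0}=1$, so $0$ is never a root and it suffices to locate the roots on $(-\infty,0)$.

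The first step would be to set up a strengthened inductive hypothesis: not just that $Z_{G,\lambda}$ has only negative real zeros, but that for a suitable choice of "reducible" vertex $v$ the negative real roots of $Z_{G-v,\lambda}$ and of $Z_{G-N[v],\lambda}$ interlace in the appropriate sense. From such interlacing, a standard sign-change argument applied to the above recursion forces $Z_{G,\lambda}$ to have only negative real roots, with one root trapped in each interval determined by consecutive roots of $Z_{G-v,\lambda}$; and crucially, this conclusion can be packaged so as to provide the interlacing data needed to continue the induction.

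The heart of the matter, and the main obstacle, is preserving the interlacing invariant under the two deletion operations $G\mapsto G-v$ and $G\mapsto G-N[v]$. This is precisely where claw-freeness must be exploited: one needs to choose $v$ so that both resulting subgraphs remain claw-free \emph{and} inherit the right joint interlacing property. A natural candidate is a vertex whose closed neighbourhood has particularly controlled structure (for example, a simplicial vertex, the endpoint of a longest induced path in $G$, or a vertex supplied by the Chudnovsky-Seymour structure theorem for claw-free graphs), since in a claw-free graph the open neighbourhood of any vertex has independence number at most~$2$ and is thus a union of two cliques, giving a useful handle on $G-N[v]$.

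I expect the hardest part to be identifying the exact inductive invariant: pairwise interlacing of $Z_{G-v,\lambda}$ with $Z_{G-N[v],\lambda}$ for a single $v$ is unlikely to be strong enough to propagate, so one probably needs a multi-vertex compatibility statement (e.g.\ simultaneous interlacing for all vertices in some carefully chosen family, or a ``common interlacer'' in the sense of real stability), tuned so that its closure under vertex and closed-neighbourhood deletion singles out exactly claw-free graphs. Once the right invariant is pinned down, the verification at the induction step should reduce to a finite combinatorial check using the structural constraint that no neighbourhood contains an induced $3$-vertex independent set.
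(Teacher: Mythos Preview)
This theorem is not proved in the paper; it is quoted from \cite{ChudnovskySeymour} as background, and the paper's own contributions (Theorems~\ref{thm:univariate}, \ref{thm:zerofreeinterval} and~\ref{thm:almostclawfree}) extend it in other directions without re-deriving the claw-free case. So there is no in-paper proof to compare your proposal against.

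Regarding the proposal itself: what you have written is explicitly a plan rather than a proof---you say ``I expect the hardest part to be identifying the exact inductive invariant'' and ``pairwise interlacing \ldots\ is unlikely to be strong enough to propagate''. Your diagnosis is accurate. The actual Chudnovsky--Seymour argument does rest on a strengthened compatibility invariant of exactly the sort you anticipate: two real-rooted polynomials $f,g$ with positive leading coefficients are declared \emph{compatible} if $cf+g$ is real-rooted for every $c\geq0$, and the inductive hypothesis asserts pairwise compatibility of the family $\{Z_{G\setminus N[v],\lambda}:v\in K\}$ as $v$ ranges over a clique~$K$ of~$G$. Claw-freeness enters because the neighbourhood of any vertex is covered by at most two cliques, which is what allows the invariant to be propagated through the recursion $Z_G=Z_{G-v}+\lambda Z_{G\setminus N[v]}$. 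So your outline points in the right direction---including your guess of a ``common interlacer'' condition---but the substance, namely naming the invariant precisely and carrying the induction through, is the entire content of \cite{ChudnovskySeymour}, and your proposal stops short of supplying it.
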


The results above restrict the location of zeros in a very strong way showing an absence of zeros on almost the entire complex plane. An important feature of these zero-freeness results is the fact that zeros are restricted away from the positive real line 
since an absence of zeros around the physically relevant positive real values implies an absence of various types of phase transition (discontinuities in thermodynamic quantities) in the underlying model.
In this direction, we completely determine to what extent the Heilman-Lieb Theorem and Chudnovsky-Seymour Theorem above can be generalised to the setting of $H$-free graphs for different choices of $H$. 

The subdivided claw $S_{i,j,k}$ is the graph obtained from the claw by subdividing the three edges $i-1$, $j-1$, and $k-1$ times respectively, so in particular $S_{1,1,1}$ denotes the claw (see Figure~\ref{fig:stars} for examples). In naming the subdivided claws, we use the convention $i\leq j\leq k$. These graphs play a crucial role in the study of independent sets in graph theory~\cite{AbrishamiEtAl,ChudnovskyQuasi} and also in our results. Note that if $G$ is an $S_{i,j,k}$-free graph (by which we mean that $G$ does not contain an induced copy of $S_{i,j,k}$), then $G$ is $S_{t,t,t}$-free for any $t \geq \max(i,j,k)$.  
We show that there is a zero-free region containing $[0, \infty)$ for the class of bounded-degree $S_{i,j,k}$-free graphs (for each fixed choice of $i,j,k$).

\begin{theorem}
\label{thm:univariate}
    Given integers $t\geq1$ and $\Delta\geq3$, there exists an open region~$F \subseteq \mathbb{C}$ containing the positive reals $[0, \infty)$ such that if $G$ is a $S_{t,t,t}$-free graph of maximum degree~$\Delta$ and $\lambda \in F$ then $Z_{G, \lambda} \not= 0$. 
\end{theorem}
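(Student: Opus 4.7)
The polynomial $Z_{G,\lambda}$ has non-negative coefficients and satisfies $Z_{G,0}=1$, so it is strictly positive on $[0,\infty)$; what must be shown is that zeros of $Z_{G,\lambda}$ stay a uniform positive distance from every point of $[0,\infty)$ across the class under consideration. Equivalently, it suffices to produce, for every $\lambda_0\in[0,\infty)$, an open disk $D\ni\lambda_0$ such that $Z_{G,\lambda}\ne 0$ for all $\lambda\in D$ and all $S_{t,t,t}$-free graphs~$G$ of maximum degree~$\Delta$; the union of these disks is then the open neighbourhood~$F$.

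\noindent\textbf{Reduction to a multivariate statement.} The plan is to deduce Theorem~\ref{thm:univariate} from a corresponding multivariate zero-free region, which the paper also announces. That multivariate statement would assert the existence, for each $G$ in the class, of an open set $U_G \subseteq \Cset^{V(G)}$ of product form $\prod_v U_v$, with each factor $U_v\subseteq\Cset$ depending only on $t$ and $\Delta$ (and in particular uniformly containing an open neighbourhood of $[0,\infty)$), such that $Z_{G,\boldsymbol\lambda}\ne 0$ for all $\boldsymbol\lambda\in U_G$. Setting $\lambda_v=\lambda$ for all~$v$ and taking $F=\bigcap_v U_v$ then yields Theorem~\ref{thm:univariate}.

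\noindent\textbf{Proof of the multivariate statement.} I would argue by induction on $|V(G)|$ using the vertex-deletion recurrence
\[
Z_{G,\boldsymbol\lambda} = Z_{G-v,\boldsymbol\lambda} + \lambda_v\, Z_{G-N[v],\boldsymbol\lambda}.
\]
Both $G-v$ and $G-N[v]$ are induced subgraphs of~$G$ and therefore remain $S_{t,t,t}$-free with maximum degree at most~$\Delta$, so the inductive hypothesis applies. Tracking the ratio $R_v = Z_{G,\boldsymbol\lambda}/Z_{G-v,\boldsymbol\lambda}$, the core task is to identify a region $\mathcal R\subseteq\Cset$ that is stable under the induced recurrence on our class: if the analogous ratios in $G-v$ lie in~$\mathcal R$, then so does~$R_v$. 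The $S_{t,t,t}$-free hypothesis should enter through the observation that in the tree-like exploration generated by iterated vertex deletion, three vertex-disjoint induced paths of length~$t$ emanating from a common branching vertex are forbidden; hence the effective branching tree cannot grow as the full $(\Delta-1)$-ary tree beyond depth~$t$, a structural fact that must be converted into a concrete contraction estimate for the recurrence.

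\noindent\textbf{Main obstacle.} The principal difficulty is that for unrestricted bounded-degree graphs, zero-free regions for the hard-core partition function extend only up to the tree uniqueness threshold $\lambda_c(\Delta)$, and zeros accumulate on $[\lambda_c(\Delta),\infty)$ along sequences of large-girth $\Delta$-regular graphs. Such graphs contain induced copies of $S_{t,t,t}$ once the girth exceeds~$2t$, so the $S_{t,t,t}$-free hypothesis precisely excludes them. The technical substance of the proof therefore lies in converting this combinatorial exclusion into an explicit analytic statement — an invariant region $\mathcal R$ for the deletion recurrence, valid in a complex neighbourhood of the entire non-negative real axis — and this translation is where I expect the bulk of the work to reside.
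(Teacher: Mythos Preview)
Your proposal contains a genuine and fatal gap in the second paragraph. You plan to deduce the univariate result from a multivariate zero-free region of product form $F^{V}$ with $F$ an open set containing all of $[0,\infty)$. The paper explicitly shows that no such multivariate region exists once $t\geq 2$: the sparse blow-ups $C_{2n}[2K_1]$ are $S_{1,2,2}$-free, yet for any open $F\supseteq[0,\infty)$ one can choose weights in $F$ that make the partition function vanish (Section~\ref{sec:examples}). So the multivariate statement you intend to prove is simply false, and the route you propose cannot succeed as stated.

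The paper does derive the univariate theorem from a multivariate lemma, but the hypothesis of that lemma (Lemma~\ref{lem:ind2}) is \emph{not} that each individual $\lambda_v$ lies in a fixed region. Instead it asks that every \emph{product} $\lambda_S=\prod_{v\in S}\lambda_v$ with $|S|\le r=2\vol(\Delta,2t)$ lies in the parabolic region $R(2^{\Delta r})$. When all weights equal a single $\lambda$ this becomes $\lambda^j\in R(2^{\Delta r})$ for $1\le j\le r$, which does carve out an open neighbourhood of $[0,\infty)$; but in the genuinely multivariate setting a large real $\lambda_u$ multiplied by a non-real $\lambda_v$ can leave the parabola, which is exactly how the counterexamples work.

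Your third paragraph is also too coarse. The single-vertex recursion $Z_G=Z_{G-v}+\lambda_v Z_{G-N[v]}$ with a ratio invariant is the Weitz/Peters--Regts machinery, and as you yourself note it stalls at $\lambda_c(\Delta)$. The paper's induction is over \emph{admissible pairs} $(L,U)$: one writes
\[
\frac{Z(U)}{Z(U\setminus\partial L)}=1+\sum_{\emptyset\ne S\in\calI(U\cap\partial L)}\lambda_S\left(\frac{Z(U\setminus\partial L)}{Z(U\setminus(\partial L\cup\partial S))}\right)^{-1},
\]
and the structural input is not ``three induced paths of length $t$ cannot emanate from a branch vertex'' but rather Lemma~\ref{lem:1dstructure} from \cite{Jerrum2024}: in an $S_{t,t,t}$-free graph of maximum degree $\Delta$, every admissible set $L$ satisfies $|L|\le r$. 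This bounds the number of terms in the sum by $2^{\Delta r}$ and the size of each $S$ by $r$, which is precisely what makes the half-plane $H(1/2)$ invariant via Lemma~\ref{lem:invariant}. Your sketch does not identify this mechanism.
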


Theorem~\ref{thm:univariate} is best possible in two important ways. Firstly, Theorem~\ref{thm:univariate} does not hold if we replace $S_{i,j,k}$ with any graph~$H$ that is not a subdivided claw. More precisely, in Section~\ref{sec:notclaw}, for each $H$ that is not a subdivided claw (or a path, which might be considered as a degenerate subdivided claw), we exhibit a class of $H$-free graphs of maximum degree~3 for which zeros accumulate at some point on the positive real axis. 
Secondly, Theorem~\ref{thm:univariate} does not hold if we drop the condition that $G$ has maximum degree~$\Delta$. Indeed, in Section~\ref{sec:examples} we exhibit a class of graphs that are $S_{i,j,k}$-free for all $i,j \geq 1$ and $k \geq 2$ (but of unbounded degree) and whose zeros are dense on almost all of $\mathbb{C}$ including on the positive real line.
The only previous result in the direction of Theorem~\ref{thm:univariate} that we are aware of is due to Bencs~\cite{BencsPhD}, who showed that the (univariate) independence polynomial of $S_{1,1,2}$-free graphs of maximum degree $\Delta$ have a zero-free sector given by $|\arg(z)| \leq \pi / \Delta$.

Theorem~\ref{thm:univariate} is in fact derived from a stronger result for the multivariate independence polynomial. The multivariate setting is more subtle even for line graphs and claw-free graphs. For line graphs, Lebowitz, Ruelle, and Speer~\cite{LebowitzRuelleSpeer} proved Theorem~\ref{thm:LRSmultivariate} below using the method of Asano contractions.\footnote{As Theorem~\ref{thm:LRSmultivariate} is only implicit in~\cite{LebowitzRuelleSpeer}, we provide some details of the proof, including an explicit description and derivation of the zero-free set, in the Appendix (Section~\ref{sec:appendix}).} Let $R(k) \subseteq \mathbb{C}$ be the parabolic region of the complex plane defined by
\[
R(k) = \Big\{z \in \mathbb{C}: \Im(z)^2 < \frac{1}{k}\Re(z) + \frac{1}{4k^2}\Big\}.
\] 
\begin{theorem}[Lebowitz-Ruelle-Speer~\cite{LebowitzRuelleSpeer}]
\label{thm:LRSmultivariate}
    Suppose $G=(V,E)$ is a line graph whose largest clique has size at most $k$ (so the maximum degree of $G$ is at most $2k-1$). If $\boldsymbol{\lambda} \in R(k^2/4)^V \subseteq \mathbb{C}^V$ then $Z_{G,\boldsymbol{\lambda}} \not= 0$.  The same holds for line graphs of multigraphs (graphs with parallel edges).
\end{theorem}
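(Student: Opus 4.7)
The plan is to establish Theorem~\ref{thm:LRSmultivariate} via the method of Asano contractions, starting from a product of simple ``star polynomials'' indexed by the vertices of $H$ (where $G = L(H)$) and assembling them into the matching polynomial of $H$ one edge-contraction at a time. Using the bijection between matchings of $H$ and independent sets of $L(H)$, I would first rewrite $Z_{L(H), \boldsymbol{\lambda}}$ as the multivariate matching polynomial of $H$. Then, for each vertex $v \in V(H)$, I would introduce a fresh variable $\mu_e^v$ for every edge $e$ incident to $v$, and form the local polynomial $P_v = 1 + \sum_{e \ni v} \mu_e^v$; this is the partition function of the clique of $L(H)$ induced by the edges at $v$. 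The product $Q = \prod_v P_v$ is multi-affine in the doubled family $\{\mu_e^v : e \ni v\}$, and a direct expansion shows that applying the Asano contraction to each pair $(\mu_e^u, \mu_e^v)$ for $e = uv$ --- substituting the pair by a single variable $\lambda_e$ --- recovers exactly $Z_{L(H), \boldsymbol{\lambda}}$. The construction carries over to multigraphs without modification, as parallel edges simply produce independent pairs of variables.

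Next, I would exhibit a zero-free region $K$ for each local variable $\mu_e^v$: since $P_v$ vanishes iff $\sum_{e \ni v} \mu_e^v = -1$, it suffices to pick an open set $K \subset \mathbb{C}$ whose $k$-fold Minkowski sum avoids $-1$ (using $\deg_H(v) \leq k$). Then I would invoke the Asano-Ruelle contraction lemma edge by edge: if a multi-affine polynomial $p(x, y)$ is nonzero on $K \times K$ (with $K$ closed and $0 \notin K$), then its Asano contraction $p^*(z)$ is nonzero on the multiplicative set $\{-xy : x, y \in K\}$. Iterating through all edges of $H$ converts the per-endpoint region $K$ for each $\mu_e^v$ into a per-edge region $\{-xy : x, y \in K\}$ for each $\lambda_e$, yielding a zero-free region for $Z_{L(H), \boldsymbol{\lambda}}$ in the original variables.

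The principal obstacle lies in choosing $K$ so that the ``multiplicative square'' $\{-xy : x, y \in K\}$ coincides with the target parabolic region $R(k^2/4)$. A natural candidate is a suitably rotated or dilated open half-plane, since the image of an open half-plane under the squaring map $w \mapsto w^2$ is bounded by a parabola whose vertex and opening can be tuned. Calibrating $K$ so that simultaneously (i) its $k$-fold Minkowski sum avoids $-1$, and (ii) the multiplicative square equals $R(k^2/4)$, is a delicate geometric computation; the factor $k^2/4$ emerges from balancing the degree-$k$ bound at each endpoint of an edge against the identification of two local variables via multiplication. Once this calibration is fixed, the remaining bookkeeping (closure statements, strict versus non-strict inequalities, and verification of the multigraph case) is routine.
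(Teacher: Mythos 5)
Your overall architecture is exactly the paper's: write $Z_{L(H),\bflambda}$ as the contraction of a product of clique polynomials $P_v=1+\sum_{e\ni v}\mu_e^v$ (one per vertex of $H$, equivalently one per clique in the edge-clique cover of the line graph), note each edge variable is split into exactly two local copies, and apply Asano--Ruelle contractions edge by edge. The multigraph remark is also handled the same way. However, there is a genuine error in your statement of the contraction lemma, and it is not cosmetic: you have swapped the roles of the zero set and the zero-free set. The Asano--Ruelle lemma says: if $K_1,K_2$ are \emph{closed sets not containing $0$} such that $p(x,y)$ can vanish \emph{only when} $x\in K_1$ or $y\in K_2$, then the contraction $p^*(z)$ can vanish only when $z\in -K_1K_2$. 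So the multiplicative product is applied to the \emph{complements} of the zero-free regions, and the new zero-free region is $\mathbb{C}\setminus(-K_1K_2)$. With your version --- $K$ the zero-free region and $\{-xy:x,y\in K\}$ the new zero-free region --- the calibration step cannot succeed: the natural zero-free region for $P_v$ is the half-plane $\{z:\Re z>-1/k\}$, which contains $0$ (violating your own hypothesis $0\notin K$), and the set $\{-xy\}$ over that half-plane is all of $\mathbb{C}$, not a parabola.

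The correct calibration, which you defer as ``delicate,'' is actually short once the lemma is stated the right way round. Since $P_v=1+\sum_{e\ni v}\mu_e^v$ with $\deg_H(v)\leq k$, the half-plane $\Phi=\{z:\Re z>-1/k\}$ is zero-free for each local variable, so each complement is $\overline\Phi=\{w:\Re w\leq -1/k\}$, and the contracted variable $\lambda_e$ is zero-free on $\mathbb{C}\setminus\{-w_1w_2:\Re w_1,\Re w_2\leq -1/k\}$. One then checks that the boundary of this set is swept out by $w_1=w_2=-1/k+y\ii$, giving $-w_1w_2=(y^2-k^{-2})+2k^{-1}y\,\ii$, i.e.\ exactly the parabola $\Im(z)^2=\tfrac{4}{k^2}\Re(z)+\tfrac{4}{k^4}$ bounding $R(k^2/4)$. (Your intuition that ``the image of a half-plane under squaring is a parabola'' is right, but it must be applied to the complementary half-plane $\Re w\leq -1/k$, not to the zero-free one.) The fact that each local variable appears in exactly two of the factors $P_v$ --- equivalently, each vertex of the line graph lies in at most two cover cliques --- is what guarantees each $\lambda_e$ undergoes exactly one contraction; it is worth stating this explicitly since it is where line-graph structure is used.
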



Notice that Theorem~\ref{thm:LRSmultivariate} gives an open multivariate zero-free region containing the positive real line $[0, \infty)$ for line graphs of bounded degree. We give examples in Section~\ref{sec:examples} that show it is not possible to obtain such multivariate zero-free regions for $S_{i,j,k}$-free graphs when $j,k \geq 2$; however, we come close by giving open multivariate zero-free regions that contain any finite subinterval of $[0, \infty)$. 

\begin{theorem}\label{thm:zerofreeinterval}
Given $\lambda_0>0$ and integers $t\geq1$ and $\Delta\geq3$, there exists an open region~$F \subseteq \mathbb{C}$ containing the interval $[0,\lambda_0]$ such that if $G=(V,E)$ is a $S_{t,t,t}$-free graph of maximum degree~$\Delta$ and $\boldsymbol{\lambda} \in F^V$ then $Z_{G, \boldsymbol{\lambda}} \not= 0$. 
\end{theorem}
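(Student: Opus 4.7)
The plan is to control the multivariate polynomial $Z_{G,\boldsymbol{\lambda}}$ by an induction on $|V(G)|$ that propagates both nonvanishing and a uniform bound on occupation ratios over a complex neighborhood of $[0,\lambda_0]^V$. For any $v \in V(G)$ the basic recursion
$$Z_{G,\boldsymbol{\lambda}} = Z_{G-v,\boldsymbol{\lambda}} + \lambda_v\, Z_{G-N[v],\boldsymbol{\lambda}}$$
shows that, conditional on $Z_{G-v,\boldsymbol{\lambda}} \neq 0$, the polynomial $Z_{G,\boldsymbol{\lambda}}$ vanishes precisely when $1 + \lambda_v R_v(\boldsymbol{\lambda}) = 0$, where $R_v := Z_{G-N[v]}/Z_{G-v}$. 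For real $\boldsymbol{\lambda} \in [0,\lambda_0]^V$ the inclusion $\calI(G-N[v]) \subseteq \calI(G-v)$ forces $R_v \in [0,1]$, and hence $1 + \lambda_v R_v \geq 1 > 0$, so real fugacities are disposed of for free.

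The essential step is to propagate this control to a uniform complex thickening. I would try to exhibit a compact $K' \subset \mathbb{C}$ and an open $F \supseteq [0,\lambda_0]$, depending only on $\lambda_0,t,\Delta$, such that (a) $1 + \lambda w \neq 0$ for every $\lambda \in F$ and $w \in K'$, and (b) for every $S_{t,t,t}$-free graph $G$ of maximum degree $\Delta$ and every $\boldsymbol{\lambda} \in F^V$, each ratio $R_v(\boldsymbol{\lambda})$ lies in $K'$. An induction on $|V(G)|$, peeling off a vertex at a time, then delivers simultaneously the nonvanishing of $Z_{G,\boldsymbol{\lambda}}$ and the persistence of the ratio bound. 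Because $F$ and $K'$ do not depend on $G$, this yields the uniform zero-free region claimed by the theorem.

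The main obstacle is to establish the uniform ratio bound from the combinatorial hypotheses alone. A bare cluster expansion using only the degree bound yields a zero-free region in a small disc near the origin, which is insufficient to cover an arbitrary finite interval $[0,\lambda_0]$; one must genuinely exploit the forbidden-subgraph condition. Plausible routes include (i)~an inductive Chudnovsky--Seymour-type structural argument, treating the $t$-ball around $v$ as the natural analogue of a claw; (ii)~a Weitz self-avoiding walk tree analysis in which $S_{t,t,t}$-freeness restricts which walks contribute nontrivially; and (iii)~an Asano-contraction scheme in the spirit of the proof of Theorem~\ref{thm:LRSmultivariate}, applied to a decomposition of $G$ into pieces of bounded local complexity. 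In each case the bounded-degree assumption ensures local finiteness of the $t$-neighbourhood, while $S_{t,t,t}$-freeness rules out the kind of branching long-range structure that the bad examples of Section~\ref{sec:examples} rely on.

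Finally, since the examples in Section~\ref{sec:examples} show that no fixed open $F$ can contain all of $[0,\infty)$ when $j,k \geq 2$, the use of the finite upper bound $\lambda_0$ is essential, and the argument should be expected to produce a region $F = F(\lambda_0,t,\Delta)$ that shrinks in an appropriate way as $\lambda_0 \to \infty$.
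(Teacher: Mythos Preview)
Your proposal is a strategy, not a proof: you explicitly leave the crucial step~(b) --- showing that the ratios $R_v$ stay in a fixed compact $K'$ uniformly over the class --- as an open problem, offering three ``plausible routes'' without executing any of them. That is the entire content of the theorem; the reduction to controlling ratios is standard and easy, and your observation that the real case is trivial is correct but not the point. Moreover, the single-vertex ratio $R_v = Z_{G-N[v]}/Z_{G-v}$ does not recurse cleanly in terms of quantities of the same type on smaller graphs, so the induction as you have set it up does not close: peeling off one vertex $v$ does not give you a formula for $R_v$ in terms of ratios $R_{v'}$ on $G-v$ or $G-N[v]$ that you could feed back into the hypothesis.

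The paper's argument fixes both issues simultaneously by tracking a richer family of ratios. One defines \emph{admissible pairs} $(L,U)$, built by growing a ``front'' $L$ layer by layer from a root, and works with $Z(U)/Z(U\setminus\partial L)$. The recursion for this quantity (expand over independent sets $S$ in $\partial L\cap U$) does close, with each term of the form $\lambda_S$ times the inverse of a ratio of the same type on a strictly smaller admissible pair. Here is where the hypotheses enter: a structural lemma (Lemma~14 of~\cite{Jerrum2024}) shows that for $S_{t,t,t}$-free graphs of maximum degree $\Delta$ every admissible $L$ has $|L|\le r := 2\,\vol(\Delta,2t)$, so the recursion has at most $2^{\Delta r}$ terms and every coefficient $\lambda_S$ is a product of at most $r$ fugacities. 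An elementary calculation then shows the half-plane $H(1/2)$ is invariant under this recursion provided all $\lambda_S$ lie in the parabolic region $R(2^{\Delta r})$. To finish, one simply takes $F$ to be a bounded sector $\{z:|z|<\lambda_0+\epsilon,\ |\arg z|<\psi\}$ with $\psi$ small enough that $z_1\cdots z_r\in R(2^{\Delta r})$ whenever all $z_i\in F$; this is precisely where the bound $\lambda_0$ is used, consistent with your final paragraph.
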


The proof can be found in Section~\ref{sec:zerofreeregion}. Of course, the examples for the univariate setting mentioned earlier also show that Theorem~\ref{thm:zerofreeinterval} cannot hold  if we replace $S_{i,j,k}$ with any graph $H$ that is not a subdivided claw or if we drop the maximum degree condition.

For $S_{1,1,k}$-free graphs, our examples in Section~\ref{sec:examples} do not preclude the possibility of an open multivariate zero-free region containing the entire positive real line $[0, \infty)$. While we cannot prove the existence of such a region for claw-free graphs, we obtain a result that extends Theorem~\ref{thm:LRSmultivariate} to a larger class of graphs (that almost includes all claw-free graphs) and that also enlarges the multivariate zero-free region in Theorem~\ref{thm:LRSmultivariate}. We need some notation to describe the result.

A simplicial clique $K$ in a graph $G$ is a clique such that for every vertex $v \in V(K)$, the neighbours of $v$ outside $K$ induce a clique in $G$, that is $G[\partial v \setminus V(K)]$ is a clique for all $v \in V(K)$. Write $\mathcal{G}^{\text{Cl-S}}$ for set of claw-free graphs for which every connected component has a simplicial clique and write $\mathcal{G}^{\textup{Cl-S}}_{k}$ for the set of graphs in $\mathcal{G}^{\textup{Cl-S}}$ whose largest clique has size at most $k$. The class $\mathcal{G}^{\textup{Cl-S}}$ has been previously studied, e.g., in \cite{CSSS, Freefermion,Amini}. It is also worth noting (see Lemma~\ref{lem:simplicial}) that 
given any connected claw-free graph of bounded degree say $\Delta$, one can delete at most $\Delta + 1$ vertices\footnote{Given any claw-free graph $G$, we claim that removing any vertex $v$ and all its neighbours gives a graph $G'$ in $\mathcal{G}^{\text{Cl-S}}$. Indeed, each component $C$ of $G'$ is adjacent to some neighbour $u$ of $v$ and it is easy to check that the neighbours of $u$ in $C$ form a simplicial clique (using that $G$ is claw-free).}  to obtain a graph in $\mathcal{G}^{\textup{Cl-S}}$; thus every claw-free graph of bounded degree is very close to a graph in $\mathcal{G}^{\textup{Cl-S}}$.
We prove the following in Section~\ref{sec:clawfree}.
\begin{theorem}
\label{thm:almostclawfree}
    For any graph $G \in \mathcal{G}^{\textup{Cl-S}}_{k}$ if $\boldsymbol{\lambda} \in R(k)^V \subseteq \mathbb{C}^V$ then $Z_{G,\boldsymbol{\lambda}} \not= 0$. 
\end{theorem}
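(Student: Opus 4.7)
The plan is to prove Theorem~\ref{thm:almostclawfree} by strong induction on $|V(G)|$, after strengthening the inductive hypothesis to track not just that $Z_{G,\boldsymbol{\lambda}}\neq 0$ but also the location in $\mathbb{C}$ of the \emph{clique ratios} $\rho_U(G) := Z_{G-U,\boldsymbol{\lambda}}/Z_{G,\boldsymbol{\lambda}}$, indexed by cliques $U\subseteq V(G)$. The strengthened statement is that for an appropriate family of regions $D_0,\ldots,D_k\subseteq\mathbb{C}$, one has $\rho_U(G)\in D_{|U|}$ whenever $G\in\mathcal{G}^{\textup{Cl-S}}_{k}$ and $\boldsymbol{\lambda}\in R(k)^V$; taking $U=\emptyset$ yields $Z_G\neq 0$. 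This parallels the usual proof strategy for Heilmann-Lieb and its Lebowitz-Ruelle-Speer multivariate extension, where vertex ratios are shown to stay in a half-plane and non-vanishing is obtained as a corollary.

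For the inductive step, first reduce to connected $G$ by multiplicativity of $Z$ over components. Let $K$ be a simplicial clique of $G$, set $H:=G-K$, and for each $v\in K$ set $K_v:=N(v)\setminus K$, which is a clique of size at most $k-1$ (since $\{v\}\cup K_v$ is itself a clique of $G$ and $G$ has clique number at most $k$). Summing over which vertex of $K$, if any, appears in an independent set gives
\[
Z_{G,\boldsymbol{\lambda}} \;=\; Z_{H,\boldsymbol{\lambda}}\cdot\Bigl(1\;+\;\sum_{v\in K}\lambda_v\,\rho_{K_v}(H)\Bigr).
\]
By the inductive hypothesis applied to $H$ and to each $H-K_v$ (both strictly smaller than $G$), these partition functions are nonzero and each $\rho_{K_v}(H)\in D_{|K_v|}$. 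A purely geometric claim about $R(k)$ and the regions $D_m$ then shows the parenthesised factor is nonzero, giving $Z_G\neq 0$. The containment $\rho_U(G)\in D_{|U|}$ for any clique $U$ of $G$ is obtained by applying the same expansion to $G-U$ in place of $G$.

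The main obstacle is twofold. Quantitatively, one must identify the family $D_m$—plausibly half-planes or disks chosen so that multiplication by $R(k)$ followed by a sum of at most $k$ terms avoids $-1$, while being reproduced by the expansion above. Extracting the improvement from the parameter $k^2/4$ of Theorem~\ref{thm:LRSmultivariate} to $k$ here requires leveraging that each $K_v$ has size at most $k-1$, which is a sharper constraint than the line-graph argument affords, and presumably uses the explicit parabolic shape of $R(k)$ and its convexity. Structurally, one must verify that $H=G-K$ and each $H-K_v$ remain in $\mathcal{G}^{\textup{Cl-S}}_{k}$: while claw-freeness and the clique-number bound descend automatically to induced subgraphs, the simplicial-clique-in-every-component condition is not obviously hereditary, so $K$ probably needs to be chosen as a ``leaf'' of an appropriate simplicial-clique decomposition of $G$, or alternatively the induction must be carried out one vertex at a time with the simplicial-clique structure tracked more carefully.
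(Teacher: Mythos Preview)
Your general shape—induction on $|V(G)|$, tracking ratios of partition functions, exploiting a simplicial clique—is the right one, and your structural worry is real but resolvable: the paper proves (Lemma~\ref{lem:simplicial}) that $\mathcal{G}^{\textup{Cl-S}}$ is hereditary, so $H=G-K$ and $H-K_v$ automatically remain in the class.

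The genuine gap is in your inductive statement. You propose to show $\rho_U(G)\in D_{|U|}$ for \emph{every} clique $U$, but the recursion you write down cannot reproduce this. Your expansion
\[
Z_G=Z_H\Bigl(1+\sum_{v\in K}\lambda_v\,\rho_{K_v}(H)\Bigr)
\]
only controls the single number $Z_G/Z_H$; to control $\rho_U(G)=Z_{G-U}/Z_G$ for an arbitrary clique $U$ you would need a recursion of the form
\[
\rho_U(G)^{-1}=1+\sum_{v\in U}\lambda_v\,\rho_{N(v)\setminus U}(G-U),
\]
which requires each $N(v)\setminus U$ to be a clique, i.e.\ requires $U$ itself to be simplicial. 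After removing $K$, the cliques $K_v$ that show up in the next step need not be simplicial in $H$, so the induction does not close. Your suggestion to ``apply the same expansion to $G-U$'' yields an expression for $Z_{G-U}$ via a (possibly different) simplicial clique of $G-U$, but the quotient of two such expressions gives no control on the location of $\rho_U(G)$.

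The paper sidesteps this by tracking a narrower family of ratios. It adjoins an artificial vertex $u_K$ adjacent to all of $K$ and grows a path $Px$ from $u_K$; the inductive quantity is $Z(U_{Px})/Z(U_{Px}\setminus\partial x)$, where $U_{Px}=V(G)\setminus\bigl(V(P)\cup\partial(V(P)\setminus\{x\})\bigr)$. Claw-freeness together with the predecessor $x^-$ on the path forces $\partial x\cap U_{Px}$ to be a clique of size $\leq k$, and extending the path by any $y\in\partial x\cap U_{Px}$ gives $U_{Pxy}=U_{Px}\setminus\partial x$, so the new ratio is of exactly the same form. This yields the self-referential recursion
\[
\frac{Z(U_{Px})}{Z(U_{Px}\setminus\partial x)}=1+\sum_{y\in \partial x\cap U_{Px}}\lambda_y\left(\frac{Z(U_{Pxy})}{Z(U_{Pxy}\setminus\partial y)}\right)^{-1},
\]
and a single invariant region suffices: Lemma~\ref{lem:invariant} says that if each $\lambda_y\in R(k)$ and each inner ratio lies in the half-plane $H(1/2)=\{\Re z\geq 1/2\}$, then so does the outer ratio. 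No family $D_m$ is needed, and the bound $|K_v|\leq k-1$ you hoped to exploit is not used; the improvement from $R(k^2/4)$ to $R(k)$ comes simply from running the recursion directly rather than through Asano contractions.
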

Note that any line graph whose largest clique has size at most $k$ belongs to $\mathcal{G}^{\textup{Cl-S}}_{k}$ and so Theorem~\ref{thm:almostclawfree} extends Theorem~\ref{thm:LRSmultivariate} to a larger graph class.
Furthermore Theorem~\ref{thm:almostclawfree} gives a multivariate zero-free region $R(k)$ that matches the $R(k^2/4)$ in Theorem~\ref{thm:LRSmultivariate} for $k=4$, and is larger for $k>4$. We give examples in Section~\ref{sec:examples} showing that essentially any multivariate zero-free region for $\mathcal{G}^{\textup{Cl-S}}_k$ must be contained in $R(k/2)$, so the multivariate zero-free region we give cannot be enlarged by much (if at all).

We point out that from Theorem~\ref{thm:univariate} it follows using Barvinok's interpolation method~\cite{Barvinok} together with the results in \cite{PatelRegts} that there is a deterministic polynomial-time approximation algorithm (specifically a fully polynomial-time approximation scheme) to compute $Z_{G, \lambda}$ for any $S_{i,j,k}$-free graph of maximum degree $\Delta$ and any $\lambda \in F$ with $F$ as given in Theorem~\ref{thm:univariate}. Similar algorithms follow from Theorem~\ref{thm:zerofreeinterval} and Theorem~\ref{thm:almostclawfree}. Previously, the second author~\cite{Jerrum2024} gave a randomised approximation algorithm for the same computation for positive real values of $\lambda$ (also in the multivariate setting). This all builds on previous work~\cite{JerrumSinclair, Matthews, ChenGu} that we do not review here.


\subsubsection*{{\bf Related results}}
 Leake and Ryder~\cite{LeakeRyder} consider a weaker form of multivariate zero-freeness called same-phase stability (in which all variables have the same argument) and establish that same-phase stability holds for independence polynomials of all claw-free graphs. They also establish bounds to the closest (univariate) zero for claw-free graphs and graphs in $\mathcal{G}^{\textup{Cl-S}}_{k}$,
which Fialho and Procacci~\cite{fialho2025} recently improved on (in certain settings).

Besides $H$-free graphs,
there is also considerable interest in the hardcore model on lattices since they often arise in nature. Usually, there is no open zero-free region containing the entire positive real line, but one seeks to bound the zeros in the complex plane e.g.\ for the square lattices of fixed dimension (and their subgraphs)~\cite{deBoer2024, Helmuth2020} and hierarchical lattices~\cite{HierLattice2025}. Lattices such as the kagome lattice or the pyrochlore lattice are in fact line graphs so that the Heilman-Lieb Theorem already applies. 

For certain classes of trees, Bencs~\cite{Bencs2018} showed real rootedness of the independence polynomial.
The class of general trees of bounded degree has received considerable attention because, as it turns out, the zeros of trees of maximum degree $\Delta$ coincide exactly with zeros of general graphs of maximum degree $\Delta$ (see e.g.\ \cite{Bencs2018}).
In this direction, Dobrushin~\cite{Dobrushin1, Dobrushin2} and Shearer~\cite{Shearer} independently established a zero-free disc for graphs of maximum degree $\Delta$ that is  centered at zero of radius $\lambda_s(\Delta) = (\Delta - 1)^{\Delta -1}/ \Delta^{\Delta}$.
While this zero-free disc cannot be enlarged in the negative direction, in the positive real direction, Peters and Regts~\cite{PetersRegts} found a zero-free region that contains a neighbourhood of the interval $[0, \lambda_c(\Delta))$, where $\lambda_c(\Delta) = (\Delta - 1)^{\Delta -1} / (\Delta - 2)^{\Delta} > \lambda_s(\Delta)$ thus resolving a conjecture of Sokal~\cite{Sokal}.
This began a sequence of works which together show that,
for graphs of maximum degree $\Delta$, 
there is a cardioid region $C_{\Delta} \subseteq \mathbb{C}$ (that intersects $\mathbb{R}$ in the interval ($-\lambda_s(\Delta)$, $\lambda_c(\Delta)$))  outside of which  zeros of the independence polynomial are dense \cite{deBoerBGPR, BezakovaGGS} but inside of which there are both zeros~\cite{Buys} and zero-free regions~\cite{BCSVondrak} suggesting an exact description might be difficult to give (although there is an understanding of the limiting behaviour as $\Delta \to \infty$~\cite{BBP25}).

\section{Notation}

For a graph $G=(V,E)$ and a vertex $v \in V$, we write $\partial v$ for the neighbours of $v$ in $G$.
For $U\subseteq V$, we write $\partial U=\{v\in V\setminus U:v \in \partial u \text{ for some }u\in U\} = (\cup_{u \in U} \partial u) \setminus U$ for the boundary of $U$. We write $G[U]$ for the graph induced by $G$ on~$U$. We write $U^{(\leq r)} := \{S \subseteq U: 1 \leq |S| \leq r\}$.

We recall and extend some notation from the Introduction.
Given complex vertex weights $\boldsymbol{\lambda}=(\lambda_v)_{v\in V} \in \mathbb{C}^V$ and $S \subseteq V$, we write $\lambda_{S} = \prod_{v \in S}\lambda_v$.  We write $\calI(G)$ for the set of all independent sets in~$G$.  Then the \emph{(multivariate) independence polynomial} of $G$ is 
$$Z_{G,\boldsymbol{\lambda}} = \sum_{S\in\calI(G)} \lambda_S.$$
As the graph $G$ remains fixed most of the time, we generally use the abbreviated forms $\calI(U)=\calI(G[U])$ and $Z(U) = Z_{G[U],\boldsymbol{\lambda}[U]}$, where $\boldsymbol{\lambda}[U]$ are the weights inherited from~$G$ by~$G[U]$.


Recall that for $1\leq i\leq j\leq k$, we write $S_{i,j,k}$ for the subdivided claw whose leaves are at distance $i$, $j$ and $k$ from the degree-3 vertex (see Figure~\ref{fig:stars}). 
  Note that the class of $S_{i,j,k}$-free graphs is contained in the class of $S_{t,t,t}$-free graphs, where $t=\max\{i,j,k\}$, so we lose little in what follows by focusing on symmetric subdivided claws.

\section{Zero-free regions for graphs with no induced subdivided claw}
\label{sec:zerofreeregion}

Given a graph $G=(V,E)$ together with a fixed root vertex $u \in V$,
we define admissible pairs of vertex subsets recursively as follows. For $L, U \subseteq V$, we define $(L,U)$ to be \emph{admissible} (with respect to $G, u$) if $(L,U)$ satisfies
\begin{itemize}
    \item $(L,U) = (\emptyset, V)$; or
    \item $(L,U) = (\{ u \}, V \setminus \{ u \})$; or
    \item $L$ is an independent set satisfying
    $\emptyset \not= L \in \calI(\partial L' \cap U')$ and  $U = U' \setminus  \partial L' $ for some admissible pair $(L',U')$. 
\end{itemize}
We use the unusual convention that $\partial \emptyset = \{ u \}$ in order to omit the second item above and streamline our inductive argument. 
It is easy to see that $U \cap L = \emptyset$ for any admissible pair $(L,U)$ and so $(L,V)$ is admissible if and only if $L = \emptyset$; this is a fact we use later. See Figure~\ref{fig:admissible} for an illustration of the inductive step (third bullet point above) in the definition of admissible pair.

 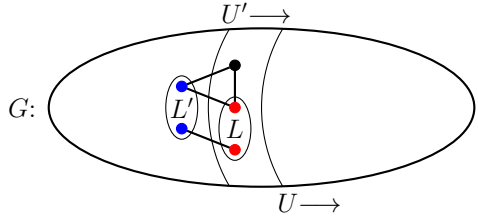
\begin{figure}
    \centering
      \begin{tikzpicture}[scale=0.7, inner sep=2pt, transform shape]
        \draw (0,0) ellipse (4 and 1.5) [thick];
        \begin{scope}
          \clip (0,0) ellipse (4 and 1.5);
          \draw (3,0) ellipse (3 and 3);
          \draw (2,0) ellipse (3 and 3);
        \end{scope}
        \draw (-0.5,-0.4) ellipse (0.3 and 0.6);
        \draw (-1.5,0) ellipse (0.3 and 0.6);
        \draw (-1.5,0) node {\Large $L'$};
        \draw (-0.5,-0.4) node {\Large $L$};
        \draw (-0.1,1.8) node {\Large $U'{\longrightarrow}$};
        \draw (+0.9,-1.8) node {\Large $U{\longrightarrow}$};
        \draw (-4.5,0) node {\Large $G$:};
        \draw (-1.5,0.4) node [draw,fill,shape=circle,color=blue] (u1) {};
        \draw (-1.5,-0.4) node [draw,fill,shape=circle,color=blue] (u2) {};
        \draw (-0.5,0.8) node [draw,fill,shape=circle,color=black] (v1) {};
        \draw (-0.5,0) node [draw,fill,shape=circle,color=red] (v2) {};
        \draw (-0.5,-0.8) node [draw,fill,shape=circle,color=red] (v3) {};
        \path (u1) edge [thick] (v1);
        \path (u1) edge [thick] (v2);
        \path (u2) edge [thick] (v3);
        \path (v1) edge [thick] (v2);
      \end{tikzpicture}  
    \caption{A derivation of admissible pair $(U,L)$ from admissible pair $(U',L')$.}
    \label{fig:admissible}
  \end{figure}

The introduction of admissible pairs is motivated by a process used in~\cite{Jerrum2024} to sample red-blue pairs of random independent sets in~$G$.  The union $R\cup B$ of independent sets $R$ and~$B$ induces connected components in the graph~$G$.  The inductive definition of admissible pairs given above mirrors the process of growing the red-blue connected component from a distinguished vertex~$u$, presented in \cite[Algorithm~1]{Jerrum2024}.  

Our approach to proving non-vanishing of the partition function is an induction over pairs $(L,U)$ of vertex subsets. The admissible pairs arise as the pairs $(L,U)$ that appear in that inductive argument.  The usage here may seem quite different to that in \cite{Jerrum2024}, but the crucial property of $S_{t,t,t}$-free graphs that we rely on is the same, namely that $|L|$ remains bounded.  (For general graphs with degree-bound $\Delta$ the set $L$ may grow unceasingly.) The following is a restatement of Lemma 14 of~\cite{Jerrum2024}.

\begin{lemma}
\label{lem:1dstructure}
    Let $G$ be a graph of maximum degree $\Delta$ that is $S_{t,t,t}$-free. If $(L,U)$ is an admissible pair then $|L| \leq 2 \vol(\Delta, 2t)$, where
    \[
     \vol(\Delta, t) := 1 + \Delta \frac{(\Delta - 1)^t - 1}{\Delta - 2}. 
    \]
\end{lemma}

Our goal is to show that $Z_{G,\boldsymbol{\lambda}} = Z(V)$ is non-zero for $S_{t,t,t}$-free graphs $G=(V,E)$ when the complex weights $\boldsymbol{\lambda}$ are close to the positive real line. The first lemma establishes invariant regions of the complex plane under complex transformations that arise as recursions in our induction proof.
Recall that we write $R(k) \subseteq \mathbb{C}$ for the parabolic region in the complex plane given by
\[\label{eq:parabola}
R(k) = \Big\{z \in \mathbb{C}: \Im(z)^2 < \frac{1}{k}\Re(z) + \frac{1}{4k^2}\Big\}.
\]
Similarly, write $H(t)$ for the halfplane in $\mathbb{C}$ given by 
\[
H(t) = \{z \in \mathbb{C}: \Re(z) \geq t\}.
\]

\begin{lemma}
\label{lem:invariant}
Fix $\vec{a} = (a_1, \ldots, a_k) \in R(k)^k$ and consider the
   complex function $f_{\vec{a}}: \mathbb{C}^k \rightarrow \mathbb{C}$ given by $f_{\vec{a}}(z_1, z_2, \ldots, z_k) = 1 + \sum_{j=1}^k a_jz_j^{-1}$. Then $f_{\vec{a}}(H(1/2)^k) \subseteq H(1/2)$. 
\end{lemma}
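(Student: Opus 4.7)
The plan is to decompose $f_{\vec a}$ into three stages—inversion $z \mapsto z^{-1}$, multiplication by the constants $a_j$, and adding the constant $1$—and track the image at each stage. The key geometric observation is that the half-plane $H(1/2)$ and the parabolic region $R(k)$ admit reformulations that pair perfectly under multiplication.

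First I would observe that for any $z \in H(1/2)$, the reciprocal $w := 1/z$ lies in the closed disk $D := \{w \in \mathbb{C} : |w-1| \leq 1\}$: the inequality $|w-1|^2 \leq 1$ is equivalent (after multiplying through by $|z|^2$) to $1 \leq 2\Re(z)$, i.e.\ $\Re(z) \geq 1/2$. So for each $z_j \in H(1/2)$ one may write $z_j^{-1} = 1 + \zeta_j$ with $|\zeta_j| \leq 1$. Next I would reformulate $R(k)$: since $R(k) \subseteq \{\Re(a) > -1/(4k)\}$, the quantity $\Re(a) + 1/(2k)$ is strictly positive on $R(k)$, and squaring the defining inequality shows
\[
a \in R(k) \iff |a| < \Re(a) + \tfrac{1}{2k},
\]
i.e.\ $R(k)$ is the parabola with focus $0$ and directrix $\Re(z) = -1/(2k)$.

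Combining the two reformulations, for any $a_j \in R(k)$ and $z_j \in H(1/2)$,
\[
\Re(a_j z_j^{-1}) = \Re(a_j) + \Re(a_j \zeta_j) \geq \Re(a_j) - |a_j| > -\tfrac{1}{2k}.
\]
Summing over $j = 1, \ldots, k$ yields $\Re\bigl(\sum_j a_j z_j^{-1}\bigr) > -\tfrac12$, and hence $\Re\bigl(f_{\vec a}(\vec z)\bigr) > \tfrac12$, which is exactly the required inclusion. The main conceptual step is the focus--directrix reformulation of $R(k)$; once that identification is in place, the constants $1/2$ in $H(1/2)$, $1/(2k)$ in $R(k)$, and the fact that $\vec a$ has $k$ coordinates are perfectly tuned, and the additive $1$ in the definition of $f_{\vec a}$ absorbs precisely the accumulated deficit $1/2$.
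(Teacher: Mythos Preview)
Your proof is correct and follows essentially the same route as the paper's: both observe that $z_j^{-1}$ lies in the closed disc of radius~$1$ about~$1$, deduce $\Re(a_jz_j^{-1})\ge\Re(a_j)-|a_j|$, and then use the defining inequality of $R(k)$ to bound this below by $-1/(2k)$. The only cosmetic difference is that you phrase the last step as the focus--directrix characterisation $|a|<\Re(a)+1/(2k)$ of the parabola, whereas the paper carries out the equivalent computation $\Re(a_j)-\sqrt{\Re(a_j)^2+\Im(a_j)^2}\ge\Re(a_j)-\sqrt{(\Re(a_j)+1/(2k))^2}=-1/(2k)$ directly.
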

\begin{proof}
    Given $\vec{z} = (z_1, \ldots, z_k) \in H(1/2)^k$, we have for each $j$ that $z_j^{-1}$ lies inside the disc centered at $1$ of radius $1$ and so $a_jz_j^{-1}$ lies in the disc centered at $a_j$ of radius $|a_j|$. The leftmost point of this disc has real part $\Re(a_j) - |a_j|$ and furthermore, by our choice of $a_j$, we have
    \begin{align*}
    \Re(a_j) - |a_j| 
    &= \Re(a_j) - (\Re(a_j)^2 + \Im(a_j)^2)^{1/2} \\
    &\geq \Re(a_j) - \big(\Re(a_j)^2 + \Re(a_j)/k + (1/2k)^2\big)^{1/2} 
    = -1/2k.
    \end{align*}
    This shows that $\Re(a_jz_j^{-1}) \geq -1/2k$ and so $\Re(f_{\vec{a}}(z_1, \ldots, z_k)) \geq 1/2$, i.e., $f_{\vec{a}}(\vec{z}) \in H(1/2)$.
\end{proof}

\begin{lemma}
\label{lem:ind2}
Fix positive integers $t, \Delta$ and let $r = 2\vol(\Delta, 2t)$. 
    For any $S_{t,t,t}$-free graph $G=(V,E)$ of maximum degree $\Delta$, any complex vertex weights $\boldsymbol{\lambda}=(\lambda_v)_{v \in V}$ satisfying $\lambda_S \in R(2^{\Delta r})$ for all $S \in V^{(\leq r)}$, any root vertex $u \in V$, and any admissible pair $(L,U)$, we have
    \begin{itemize}
        \item[(i)] $Z(U), Z(U \setminus \partial L) \not= 0$;
        \item[(ii)] $Z(U) / Z(U \setminus \partial L)) \in H(1/2)$.
    \end{itemize}
    In particular $Z_{G,\boldsymbol{\lambda}} \not= 0$ by taking the admissible pair $(\emptyset, V)$.
\end{lemma}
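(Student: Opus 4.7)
The proof is a double induction, outer on $|V|$ and inner on $|U|$, built around the standard recursion for the independence polynomial obtained by partitioning each $T\in\calI(G[U])$ according to $S := T \cap (U \cap \partial L)$:
\[
Z(U) \;=\; \sum_{S \in \calI(U \cap \partial L)} \lambda_S\, Z(U \setminus \partial L \setminus \partial S).
\]
Separating off the $S=\emptyset$ term (which contributes $Z(U \setminus \partial L)$) and dividing through puts the identity into the form
\[
\frac{Z(U)}{Z(U \setminus \partial L)} \;=\; 1 \;+\; \sum_{\emptyset\neq S\in\calI(U\cap\partial L)} \lambda_S\, z_S^{-1}, \qquad z_S := \frac{Z(U \setminus \partial L)}{Z(U\setminus \partial L\setminus \partial S)},
\]
which matches exactly the shape of $f_{\vec{a}}$ in Lemma~\ref{lem:invariant}.

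For the inner step, the key observation is that for each nonempty $S\in\calI(U\cap\partial L)$ the pair $(S, U\setminus\partial L)$ is admissible by the third clause of the definition, and $|U\setminus\partial L| < |U|$ because $S\ne\emptyset$. The inner hypothesis then supplies $Z(U\setminus\partial L)\ne 0$ and $z_S\in H(1/2)$. To feed these into Lemma~\ref{lem:invariant} I need a single value of $k$ that both bounds the number of terms and contains every coefficient. Lemma~\ref{lem:1dstructure}, applied once to $(L,U)$ and once to each $(S, U\setminus\partial L)$, gives $|L|\le r$ and $|S|\le r$; hence $|U\cap\partial L|\le\Delta r$, the number of nonempty $S$ is at most $2^{\Delta r}$, and each such $S\in V^{(\le r)}$ so by hypothesis $\lambda_S\in R(2^{\Delta r})$. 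Padding the sum with zero coefficients (noting $0\in R(2^{\Delta r})$ since the parabolic inequality holds at the origin) to bring the total number of terms up to exactly $2^{\Delta r}$, Lemma~\ref{lem:invariant} with $k=2^{\Delta r}$ delivers $Z(U)/Z(U\setminus\partial L)\in H(1/2)$, whence $Z(U)\ne 0$.

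The outer induction handles the degenerate case $U\cap\partial L=\emptyset$, where the recursion collapses to $Z(U)=Z(U\setminus\partial L)$ and the ratio is trivially $1\in H(1/2)$, but the nonvanishing of $Z(U)$ is not automatic. In this case $(L,U)\ne(\emptyset,V)$ (the convention $\partial\emptyset=\{u\}$ makes $\partial L\cap U=\{u\}$ there), so $(L,U)$ arises from the third clause with $L\ne\emptyset$, giving $|U|\le|V|-|L|<|V|$; the outer hypothesis then applies the full lemma to the strictly smaller induced subgraph $G[U]$ (still $S_{t,t,t}$-free of maximum degree $\le\Delta$, and whose restricted weights still satisfy the condition on $V^{(\le r)}$) with any root $u'\in U$ and admissible pair $(\emptyset,U)$ to conclude $Z(U)\ne 0$.

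The main obstacle is the quantitative bookkeeping: a single parabolic region $R(2^{\Delta r})$ must contain every coefficient encountered in every recursive expansion, independently of how deep the recursion goes. This is where the $S_{t,t,t}$-free hypothesis is decisive, via Lemma~\ref{lem:1dstructure}: it uniformly caps $|L|$ by $r$, which simultaneously limits the number of summands to $2^{\Delta r}$ and keeps each $|S|\le r$ so that the hypothesis on $\boldsymbol{\lambda}$ applies to every term. Without this cap the number of terms in the recursion could blow up uncontrollably and no single parabolic region would do the job.
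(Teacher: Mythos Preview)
Your proof is correct and follows essentially the same approach as the paper: the same recursion, the same appeal to Lemma~\ref{lem:1dstructure} to bound $|L|$ and $|S|$ by $r$, and the same application of Lemma~\ref{lem:invariant} with $k=2^{\Delta r}$. The only differences are organisational: the paper runs a single induction on $|U|+|G|$ and establishes $Z(U\setminus\partial L)\ne 0$ uniformly by passing to the strictly smaller graph $G[U\setminus\partial L]$, whereas you split this into an outer induction on $|V|$ (used only in the degenerate case $U\cap\partial L=\emptyset$) and an inner induction on $|U|$ (which in the non-degenerate case yields $Z(U\setminus\partial L)\ne 0$ directly from part~(i) applied to the admissible pair $(S,U\setminus\partial L)$); your explicit padding with zero coefficients to reach exactly $2^{\Delta r}$ terms is a point the paper leaves implicit.
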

\begin{proof}
    We prove the lemma by induction on $|U| + |V|$. For the base case, when $(L,U) = (L,\emptyset)$, we have $Z(U) = Z(U \setminus \partial L) = 1$ and (i) and (ii) immediately follow.

    For the induction step, suppose we are given an admissible pair $(L,U)$ (with respect to $G,u$) with $|U| \geq 1$. We have the following recursion:
    \begin{equation}
         Z(U) = Z(U \setminus \partial L) + \sum_{\emptyset \not= S \in \calI(U \cap \partial L)} \lambda_S Z(U \setminus (\partial L \cup \partial S)).
    \end{equation}  
Note that $Z(U \setminus \partial L) \not= 0$ by induction using (i) by considering the admissible pair $(\emptyset, U \setminus \partial L)$ with respect to the smaller\footnote{Note that $U \setminus \partial L$ must be a strict subset of $V$ since either $U$ is a strict subset of $V$ or $(L,U)=(\emptyset,V)$ (from earlier) in which case $\partial L =\{u\}$.} graph $G[U \setminus \partial L]$ and any root vertex $u' \in U \setminus \partial L$.    
Therefore, we can divide through by $Z(U \setminus \partial L)$ to obtain 
\begin{equation}
\label{eq:recursion}
         \frac{Z(U)}{Z(U \setminus \partial L)} = 1 + \sum_{\emptyset \not= S \in \calI(U \cap \partial L)} \lambda_S \left( \frac{Z(U \setminus \partial L)}{Z(U \setminus (\partial L \cup \partial S))} \right)^{-1}.
\end{equation}     

If $U \cap \partial L = \emptyset$ then $U = U \setminus \partial L$ and so $Z(U) = Z(U \setminus \partial L) \not= 0$ (as established above) from which (i) and (ii) follow. 
If $U \cap \partial L \not= \emptyset$, then for any nonempty $S\in\calI(U \cap \partial L)$, the pair $(S, U \setminus \partial L)$ is admissible (by definition). 
Therefore $Z(U \setminus (\partial L \cup \partial S)) \not= 0$ by induction (i) and  $\frac{Z(U \setminus \partial L)}{Z(U \setminus (\partial L \cup \partial S))} \in H(1/2)$ by induction (ii). 
Recall from Lemma~\ref{lem:1dstructure} that $|L| \leq r = 2\vol(\Delta, 2t)$ and so $|\partial L| \leq  \Delta r$; this means the sum above has at most $2^{\Delta r}$ terms.   
(This is a loose bound;  a better one would follow from noting that $|S|\leq r$).  By our choice of $\boldsymbol{\lambda}$ and noting $|S| \leq r$ (by Lemma~\ref{lem:1dstructure}), we have $\lambda_S \in R(2^{\Delta r})$. Applying Lemma~\ref{lem:invariant} to \eqref{eq:recursion} shows (ii). From this (i) follows.
\end{proof}

As noted in the Introduction, we cannot obtain a multivariate zero-free region containing $(0, \infty)$, but we can get such a univariate zero-free region, and also an open multivariate zero-free region containing any finite interval of the positive real axis. 

First, to obtain the univariate zero-free region, we specialise Lemma~\ref{lem:ind2} to the admissible pair $(\emptyset,V)$ and the constant weighting where $\lambda_v = \lambda$ for all $v \in V$.

\begin{proof}[Proof of Theorem~\ref{thm:univariate}]
We are given positive integers $t, \Delta$. Set $r = 2\vol(\Delta, 2t)$.
For each $\ell \in \mathbb{N}$, let $F_{\ell} = \{\lambda \in \mathbb{C} : \lambda^{\ell} \in R(2^{\Delta r})\}$, which is an open set containing $[0, \infty)$.
Let
\[
F= \{\lambda \in \mathbb{C} : \lambda^{\ell} \in R(2^{\Delta r}) \text{ \: for all \: } \ell = 1, \ldots, r\} = \bigcap_{\ell=1}^r F_{\ell},
\]
which is also an open set containing $[0, \infty)$.
For any $S_{t,t,t}$-free graph $G = (V,E)$ of maximum degree $\Delta$ and any $\lambda \in F$,
setting $\lambda_v = \lambda$ for all $v \in V$, we have $\lambda_S \in R(2^{\Delta r})$ for any $S \in V^{(\leq r)}$.
Thus we can apply Lemma~\ref{lem:ind2} to conclude $Z_{G, \lambda} \not= 0$. 
\end{proof}



\begin{remark}
 We derive a slightly smaller but more explicit zero-free region. For the set $F$ defined in the proof above,  we have
 \[
F \supseteq F^* = \left\{ (x+\ii y): |y| \leq Ax^{-\frac{r-2}{2}}, x \geq \Delta^{-1}  \right\}.
 \]
 for a suitably small choice of constant $A$ depending only on $r$ and $\Delta$ (recall that $r$ itself only depends on $\Delta$ and $t$).
To see this, we must show that for any $z = x+\ii y \in F^*$, we have $z^{\ell} \in R(2^{\Delta r})$ for each $\ell = 1, \ldots, r$.
Note first that
\[
|\Im(z^{\ell})| = |\Im((x+\ii y)^{\ell})| 
= \left| \sum_{t \text{ odd}}^{\ell} \binom{\ell}{t}(\ii y)^tx^{\ell-t} \right| 
\leq \sum_{t \text{ odd}}^{\ell} \binom{\ell}{t}|y|x^{\ell-1} \leq 2^{\ell}|y|x^{\ell-1},
\]
and similarly $|\Re(z^{\ell})| \geq x^{\ell} - 2^{\ell}|y|^2x^{\ell - 2} \geq \frac{1}{2}x^{\ell}$, where both inequalities assume that $A$ is sufficiently small  (take e.g.\ $A \leq 2^{-\frac{1}{2}(r+1)}\Delta^{-\frac{1}{2}r}$). Now one can verify that $\Im(z^{\ell})^2 \leq 2^{-\Delta r}\Re{z^{\ell}}$ for sufficiently small $A$ (take e.g.\ $A \leq 2^{-\frac{1}{2}(\Delta r + 2r + 1)}$) showing that $z^{\ell} \in R(2^{\Delta r})$. 

While $F^*$ does not contain the origin, we know the disc of radius $(\Delta - 1)^{\Delta - 1}/ \Delta^{\Delta} \approx e/ \Delta$ is zero-free (see the Introduction) and this disc intersects $F^*$.
\end{remark}





Finally, we come to the existence of multivariate zero-free regions including arbitrary finite intervals of the positive real axis. 

Given any region $R \subseteq \mathbb{C}$ and positive integer $p$, we define $R^{\times p} := \{\prod_{i=1}^{q} z_i :  1 \leq q \leq p \text{ and } z_1, \ldots, z_q \in R  \}$. Also, we write $S(\rho, \theta) := \{z:|z|< \rho \text{ and } |\arg z|<\theta\}$ for the bounded sector of radius $\rho$ and angle $\theta$ and we write $B(\rho):= \{z : |z| < \rho\}$ for the disc of radius $\rho$ centered at $0$.

\begin{proof}[Proof of Theorem~\ref{thm:zerofreeinterval}]

First note that for $\rho_1 \geq 1$, we have $S(\rho_1, \theta)^{\times p} = S(\rho_1^{p}, p \theta)$ and for $\rho_2 \leq 1$, we have $B(\rho_2)^{\times p} = B(\rho_2)$. Furthermore, again assuming $\rho_2 \leq 1 \leq \rho_1$, it is easily checked that $(S(\rho_1, \theta) \cup B(\rho_2))^{\times p} \subseteq S(\rho_1^{p}, p \theta) \cup B(\rho_1^{p-1} \rho_2)$.

In the statement of Theorem~\ref{thm:zerofreeinterval}, we are given $\lambda_0 > 0$ and positive integers $t, \Delta$. As before, set $r = 2\vol(\Delta, 2t)$ and without loss of generality, we may assume $\lambda_0\geq1$.  Fix $\epsilon>0$ and  set $\rho_1 = \lambda_0 + \epsilon$. From the discussion in the previous paragraph, we can choose $\theta, \rho_2>0$ sufficiently small such that the set 
$$F:= S(\rho_1, \theta) \cup B(\rho_2)$$
satisfies that $F^{\times r} \subseteq R(2^{\Delta r})$.  
Then, by Lemma~\ref{lem:ind2}, $F$ is a multivariate zero-free region for~$Z_{G,\boldsymbol{\lambda}}$ for any $S_{t,t,t}$-free graph of maximum degree $\Delta$. 
\end{proof}

\section{Claw-free graphs}\label{sec:clawfree}

We begin this section with some structural properties of claw-free graphs. 
Recall that a simplicial clique in a claw-free graph $G$ is a clique $K$ in $G$ such that for every vertex $v \in V(K)$,   $G[\partial v \setminus K]$ is a clique. Recall that $\mathcal{G}^{\textup{Cl-S}}$ is the set of claw-free graphs whose every component has a simplicial clique and $\mathcal{G}^{\textup{Cl-S}}_k$ is the set of graphs in $\mathcal{G}^{\textup{Cl-S}}$ whose largest clique has size at most $k$.
We will require the following structural lemma; similar results are mentioned in~\cite{ChudnovskySeymour, Freefermion}.

\begin{lemma}
\label{lem:simplicial}
    If $G \in \mathcal{G}^{\textup{Cl-S}}$  and $X \subseteq V(G)$ then $G' = G[V(G) \setminus X] \in \mathcal{G}^{\textup{Cl-S}}$,  i.e.\ $\mathcal{G}^{\textup{Cl-S}}$ is a hereditary graph class. 
\end{lemma}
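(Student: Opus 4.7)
The plan is to proceed by induction on $|X|$. The case $|X|=0$ is immediate, and the inductive step reduces to the single-vertex case: if $G\in\mathcal{G}^{\textup{Cl-S}}$ and $v\in V(G)$, then $G-v\in\mathcal{G}^{\textup{Cl-S}}$. Components of $G$ not containing $v$ are unaffected and retain their simplicial cliques, so I may assume $G$ is connected with simplicial clique $K$, and it suffices to exhibit a simplicial clique in every component of $G-v$.

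First, suppose $v$ is not a cut vertex, so $G-v$ is connected. If $v\notin K$, then $K$ remains a simplicial clique of $G-v$, because the sets $\partial u\setminus K$ (for $u\in K$) can only shrink upon deleting a vertex outside $K$, and subsets of cliques are cliques. If $v\in K$ and $|K|\geq 2$, then $K\setminus\{v\}$ works by the same argument. The remaining subcase is $K=\{v\}$, so $\partial v$ is a clique in $G$; here I would verify directly that $\partial v$ is a simplicial clique in $G-v$ by a claw argument: for any $u\in\partial v$, two non-adjacent vertices $a,b\in\partial_{G-v} u\setminus\partial v$ together with $v$ would form an independent triple of common neighbours of~$u$, producing a claw in $G$.

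Now suppose $v$ is a cut vertex, so $G-v$ has components $C_1,\dots,C_k$ with $k\geq 2$. The proposal is to take $K_i:=C_i\cap\partial_G v$ as the simplicial clique of $G[C_i]$. Non-emptiness of each $K_i$ follows from connectivity of~$G$. The core observation is that $K_i$ must then be a clique: if $a,b\in K_i$ were non-adjacent, choosing any $j\neq i$ and any $c\in K_j$ would yield three vertices $a,b,c$ which are pairwise non-adjacent (since $v$ separates $C_i$ from $C_j$), yet all adjacent to~$v$, giving a claw centred at $v$. Simpliciality of $K_i$ in $G[C_i]$ then follows by the same style of claw argument: for $u\in K_i$, two non-adjacent vertices $a,b\in\partial_{G[C_i]} u\setminus K_i$ together with $v$ would be independent common neighbours of~$u$, producing a claw at~$u$.

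The main obstacle is the cut-vertex case, where the inherited simplicial clique $K$ lives in at most one component of $G-v$, forcing the other components to have simplicial cliques constructed from scratch. The key point is that the mere existence of a second component is exactly what makes the natural candidate $C_i\cap\partial v$ a clique via the claw argument at~$v$; in the non-cut-vertex case this set need not be a clique, which is why that case must be handled by modifying $K$ itself.
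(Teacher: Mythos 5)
Your proposal is correct and follows essentially the same route as the paper's own proof: reduce to deleting a single vertex $v$ from a connected graph, split on whether $v$ is a cut vertex, use $C_i\cap\partial v$ as the new simplicial clique in each component when it is (with the same two claw arguments, one centred at $v$ using a vertex from another component and one centred at $u\in\partial v$ using $v$), and otherwise modify $K$ exactly as you do in the three subcases $v\notin K$, $v\in K$ with $|K|\geq2$, and $K=\{v\}$. No gaps; the extra justification you give for why $K$ (or $K\setminus\{v\}$) stays simplicial is a correct elaboration of what the paper leaves implicit.
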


\begin{proof}
It is sufficient to prove the lemma in the case that $X=\{v\}$ is a single vertex and $G$ is connected.
Let $K$ be a simplicial clique of $G$.
We distinguish two cases, depending on whether or not $G'$ is connected.

Suppose first that removing vertex $v$ separates $G$ into $k$~connected components $G_1,\ldots,\allowbreak G_k$, with $k\geq2$.  Then, for all $i\in\{1,\ldots,k\}$ the vertex subset $U=\partial v\cap V(G_i)$ induces a clique.  
(Suppose $u,u'\in U$ and $w\in \partial v \cap V(G_j)$ with $j \not= i$.  Then $uu'$ must be an edge, otherwise $\{v,u_,u',w\}$ would induce a claw.)  
Also, $U$ must be simplicial in $G_i$.  (Suppose $u\in U$ and $w,w'\in \partial u \cap(V(G_i)\setminus U)$.  Then $ww'$ is an edge, otherwise $\{u,v,w,w'\}$ would induce a claw.)

The other possibility is that $G'=G-v$ is connected.  If $v\notin K$ then the clique $K$ remains simplicial.  If $v\in K$ and $|K|\geq2$ then $K-v$ is a simplicial clique in~$G'$.  Finally, if $K=\{v\}$, then $U= \partial v$ is a clique (since $K$ is simplicial) and $U$ is simplicial in $G'$ (otherwise there would be vertices $u\in U$ and 
$w,w'\in \partial u \setminus (U\cup \{v\})$ such that $\{u,v,w,w'\}$ induces a claw.
\end{proof}



Next we require some notation on paths. 
 Let $G$ be a graph and $P$ be a path in $G$ with $|V(P)| \geq 1$. Let $u$ and $x$ be the endpoints of $P$ (where we allow $u=x$ when $P$ has a single vertex). When working with any path, we specify one of its endpoints to be \emph{active} and we indicate that $x$  is the active vertex by writing $Px$ (and in the case when $P$ has one vertex, that vertex is taken to be active). 
 We define
\[
A_{Px}(G) = V(P) \cup \partial(V(P) \setminus \{x\}) 
\]
and
\[
U_{Px}(G) = V(G) \setminus A_{Px}. 
\]
We simply write $A_{Px}$ and $U_{Px}$ when $G$ is clear from context.
Given $Px$ in $G$ and $y \in \partial x \cap U_{Px}$, then we write $Pxy$ for the path $P$ with vertex $y$ appended to $x$ and where $y$ is now the active vertex. All paths we consider have at least one vertex.

We give one further piece of notation on simplicial cliques. For a claw-free graph $G$ with a simplicial clique $K$, we write $G + u_K$ for the graph obtained from $G$ by adding a new vertex $u_K$ that is adjacent to every vertex in $K$. It is easy to check that $G + u_K$ is claw-free and we call $u_K$ the artificial simplicial vertex, which is a natural starting point for our paths in the next lemma.

\begin{lemma}\label{lem:almostclawfree}
    For any $G = (V,E) \in \mathcal{G}_k^{\textup{Cl-S}}$ 
    with a simplicial clique $K$, any path $Px$ in $G + u_K$ starting at the artificial simplicial vertex $u_K$ and ending at $x$ (where we allow $x=u_K$), and any complex vertex weights $\boldsymbol{\lambda}=(\lambda_v)_{v \in V} \in R(k)^V \subseteq \mathbb{C}^V$ we have 
    \begin{itemize}
        \item[(i)] $Z(U_{Px}), Z(U_{Px} \setminus \partial x) \not= 0$ and
        \item[(ii)] $ \Re (Z(U_{Px}) / Z(U_{Px} \setminus \partial x) ) \geq 1/2$.
    \end{itemize}    
\end{lemma}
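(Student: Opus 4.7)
My plan is to adapt the argument of Lemma~\ref{lem:ind2} by replacing admissible pairs with paths starting at the artificial simplicial vertex~$u_K$, and to carry out a double induction: outer induction on $|V(G)|$ and inner induction on $|U_{Px}|$. The role of the $S_{t,t,t}$-free structure in Lemma~\ref{lem:ind2} (keeping $|L|$ bounded) will be played here by the claw-free plus simplicial-clique hypothesis, which I will show forces the active frontier $\partial x\cap U_{Px}$ to be a clique of size at most~$k$.

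The crucial first step is the structural claim that $\partial x\cap U_{Px}$ is a clique in $G+u_K$. When $x=u_K$ this set is exactly the simplicial clique~$K$. When $x\neq u_K$, let $x'$ be the predecessor of $x$ on~$P$; for any $a,b\in\partial x\cap U_{Px}$, both lie outside $\partial(V(P)\setminus\{x\})$ and hence are non-neighbours of~$x'$, so if $ab$ were a non-edge then $\{x,x',a,b\}$ would induce a claw at~$x$ in the claw-free graph $G+u_K$. Consequently every nonempty $S\in\calI(\partial x\cap U_{Px})$ is a singleton $\{y\}$, and the standard recursion collapses to
\[
Z(U_{Px})\,=\,Z(U_{Px}\setminus\partial x)+\sum_{y\in\partial x\cap U_{Px}}\lambda_y\,Z(U_{Px}\setminus(\partial x\cup\partial y)),
\]
a sum with at most $k$ terms.

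In Case~1 of the inductive step ($\partial x\cap U_{Px}\neq\emptyset$), I first verify the identity $A_{Pxy}=A_{Px}\cup\partial x$, which yields $U_{Pxy}=U_{Px}\setminus\partial x$. By inner induction applied to each extension $Pxy$ (whose unexplored set is strictly smaller), both $Z(U_{Pxy})$ and $Z(U_{Pxy}\setminus\partial y)$ are non-zero and their ratio lies in $H(1/2)$. Dividing the recursion by $Z(U_{Px}\setminus\partial x)=Z(U_{Pxy})$ gives
\[
\frac{Z(U_{Px})}{Z(U_{Px}\setminus\partial x)}\,=\,1+\sum_{y\in\partial x\cap U_{Px}}\lambda_y\left(\frac{Z(U_{Pxy})}{Z(U_{Pxy}\setminus\partial y)}\right)^{-1},
\]
which has exactly the shape needed to apply Lemma~\ref{lem:invariant}; padding with zero coefficients to fill $k$ slots (using $0\in R(k)$) delivers~(ii), and~(i) follows since $0\notin H(1/2)$.

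In Case~2 ($\partial x\cap U_{Px}=\emptyset$) the ratio is trivially~$1$, so I only need $Z(U_{Px})\neq 0$. Here I will invoke Lemma~\ref{lem:simplicial}: because $G[U_{Px}]\in\mathcal{G}^{\textup{Cl-S}}_k$, the polynomial factors over connected components~$C$, each of which has its own simplicial clique~$K_C$. This case cannot arise when $V(P)=\{u_K\}$ (since then $\partial u_K\cap U_{Px}=K\neq\emptyset$), so $|V(P)|\geq 2$ and hence $|V(C)|\leq|U_{Px}|<|V(G)|$, allowing the outer induction applied to the trivial one-vertex path $u_{K_C}$ in $G[C]+u_{K_C}$ to conclude $Z(V(C))\neq 0$. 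I expect the main obstacle to be setting up the double induction cleanly and verifying the identity $A_{Pxy}=A_{Px}\cup\partial x$ together with the claw-free argument above; the remainder of the argument closely parallels Lemma~\ref{lem:ind2}.
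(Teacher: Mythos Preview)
Your proof is correct and follows essentially the same approach as the paper's own proof. The paper uses a single induction on $|U_{Px}|+|G+u_K|$ rather than your double induction, but these are equivalent; your explicit verification of the identity $A_{Pxy}=A_{Px}\cup\partial x$ and the padding with zero coefficients to apply Lemma~\ref{lem:invariant} are minor refinements that the paper leaves implicit.
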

The partition function $Z$ in the lemma above is with respect to the graph $G + u_K$, but note that since our path always includes the vertex $u_K$ as a starting point, then $U_{Px}$ always excludes $u_K$, so we do not need to specify a weight for $u_K$.
\begin{proof}[Proof of Lemma~\ref{lem:almostclawfree}]
    We do induction on $|U_{Px}| + |V|$. If $|U_{Px}|=0$ then $Z(U_{Px})= Z(U_{Px} \setminus \partial x) = 1$ and both (i) and (ii) immediately follow.

    For the induction step first consider the case $U_{Px} \cap \partial x = \emptyset$. We see that (ii) follows immediately provided (i) holds.
    For (i), note that $U_{Px} \cap \partial x = \emptyset$ can only happen if $|V(Px)| \geq 2$, in which case $G' = G[U_{Px}]$ is strictly smaller than $G$ and every component of $G'$ has a simplicial clique by Lemma~\ref{lem:simplicial}. By applying induction to each of those components, we have $Z(U_{Px}) \not= 0$  so (i) follows. 
 Thus we may assume $U_{Px} \cap \partial x \not= \emptyset$.

Now assuming $U_{Px} \cap \partial x \not= \emptyset$, let us check that $U_{Px} \cap \partial x$ induces a clique in $G$. If $|V(Px)| = 1$ then $P$ consists of the single simplicial vertex $x = u_K$, so $\partial x$ induces a clique. If $|V(Px)| \geq 2$ and $a,b \in U_{Px} \cap \partial x$ are non-adjacent then $\{ x^-, x, a, b \}$ induces a claw in $G$ where $x^-$ is the predecessor of $x$ on $P$ (a contradiction). Hence $U_{Px} \cap \partial x$ induces a clique (of size at most $k$), and we have
\[
Z(U_{Px}) = Z(U_{Px} \setminus \partial x) + \sum_{y \in U_{Px} \cap \partial x}\lambda_y Z(U_{Px} \setminus (\partial x \cup \partial y)).
\]    
Note that $U_{Px} \setminus \partial x = U_{Pxy}$ for any $y$ adjacent to $x$ and so by induction (using (i)), we have $Z(U_{Px} \setminus \partial x) = Z(U_{Pxy}) \not= 0$, so we can divide the recursion above by this quantity to obtain 
\begin{align*}
\frac{Z(U_{Px})}{Z(U_{Px} \setminus \partial x)} 
&= 1 + \sum_{y \in U_{Px} \cap \partial x}\lambda_y \left( \frac{Z(U_{Px} \setminus \partial x)}{Z(U_{Px} \setminus (\partial x \cup \partial y))} \right)^{-1} \\
&= 1 + \sum_{y \in U_{Px} \cap \partial x}\lambda_y \left( \frac{Z(U_{Pxy})}{Z(U_{Pxy} \setminus  \partial y)} \right)^{-1}. 
\end{align*} 
By induction, we may assume that $\Re\left( \frac{Z(U_{Pxy})}{Z(U_{Pxy} \setminus  \partial y)} \right) \geq 1/2$ and so by our choice of $\boldsymbol{\lambda}$, Lemma~\ref{lem:invariant} says that $\Re(\frac{Z(U_{Px})}{Z(U_{Px} \setminus \partial x)}) \geq 1/2$ proving (ii). This also immediately implies (i) (the second part of which was established earlier).
\end{proof}

\begin{proof}[Proof of Theorem~\ref{thm:almostclawfree}]
This follows from Lemma~\ref{lem:almostclawfree} by taking $x=u_K$, and hence $V(P)=\{x\}$ and $U_{Px}=V(G)$.
\end{proof}


\section{Examples} 
\label{sec:examples}

In this section we give examples showing the various ways in which it is not possible to extend the results we have presented so far. We will consider examples from larger and larger graph classes:


\begin{align*}
\text{line graph}&\subset\text{line graph of a multigraph}\\
&\subset\text{claw-free graph with a simplicial clique}\\
&\subset\text{claw-free graph}\\
&\subset\text{E-free graph, i.e., $S_{1,2,2}$-free}.
\end{align*}

\subsection{Even cycles}

Let $C_{2n}$ denote the even cycle with vertex set $V(C_{2n})=[2n]=\{0,1,\ldots,2n-1\}$ and edge set $E(C_{2n})=\{ij:j=i+1\pmod{2n}\}$. 
Many of our examples will be based on (even) cycles so we need to locate some of the multivariate zeros of $Z_{C_{2n},\bflambda}$. Since $C_{2n}$ is a line graph,
Theorem~\ref{thm:LRSmultivariate} with $k=2$ asserts that $R(1)$ is a multivariate zero-free region.  We show that this region cannot be extended further.  Denote by $B(z,\epsilon)$ the disc of radius $\epsilon$ about~$z$.



\begin{lemma}\label{lem:C2nnegative}
Suppose $z_0\in\partial R(1)$ and $\epsilon>0$.  Then there exist $n\in\mathbb{N}$, $z\in B(z_0,\epsilon)$ and $\bflambda:V(C_{2n})\to\{z,\zbar\}$ such that $Z_{C_{2n},\bflambda}=0$.
\end{lemma}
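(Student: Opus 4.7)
The plan is to exploit the fact that $C_{2n}$ is bipartite: I place weight $z$ on one colour class and weight $\zbar$ on the other, and compute $Z_{C_{2n},\bflambda}$ via the standard $2\times 2$ transfer-matrix method. With $A(w)=\begin{pmatrix}1&1\\ w&0\end{pmatrix}$, a routine check gives $Z_{C_{2n},\bflambda}=\Tr(M^n)$ where $M=A(z)A(\zbar)$.

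A short computation shows $\Tr M=1+2\Re z$ and $\det M=|z|^2$, so the eigenvalues of $M$ are $\mu_\pm=\tfrac12\bigl((1+2\Re z)\pm\sqrt{D(z)}\bigr)$ with discriminant $D(z)=1+4\Re z-4(\Im z)^2$. The key observation is that $D(z)>0$, $=0$, $<0$ precisely when $z$ lies in $R(1)$, on $\partial R(1)$, or outside $\overline{R(1)}$. On the boundary the parabola equation forces $1+2\Re z=2|z|$ (the negative root is excluded because $\Re z\geq-\tfrac14$), so $\mu_+=\mu_-=|z_0|>0$ and $Z_{C_{2n},\bflambda}\neq 0$ on $\partial R(1)$, consistent with the known multivariate zero-free region. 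Just outside $R(1)$ the eigenvalues become complex conjugates $|z|e^{\pm\ii\theta(z)}$ with $\cos\theta(z)=(1+2\Re z)/(2|z|)\in(-1,1)$, and the trace collapses to
\[
  Z_{C_{2n},\bflambda}=2|z|^n\cos\bigl(n\,\theta(z)\bigr).
\]

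To finish, I use an intermediate-value argument. Move $z$ along a short smooth arc that leaves $R(1)$ transversally at $z_0$ and stays inside $B(z_0,\epsilon)$. Along this arc $\theta(z)$ is continuous, equals $0$ at $z_0$, and is strictly positive off the boundary (since $(1+2\Re z)^2<4|z|^2$ characterises exactly the outside of $R(1)$). Hence $\theta$ sweeps out an interval $(0,\theta_{\max})$ for some $\theta_{\max}>0$. Taking $n$ large enough that $\pi/(2n)<\theta_{\max}$ and then picking $z$ on the arc with $\theta(z)=\pi/(2n)$, the trace formula yields $Z_{C_{2n},\bflambda}=2|z|^n\cos(\pi/2)=0$, as required.

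The only point that needs a little care is verifying that $\theta(z)$ is not accidentally identically zero on the arc, but this is automatic because $\theta(z)=0$ characterises $\partial R(1)$ exactly. The mildly delicate special case is the parabola tip $z_0=-\tfrac14$, where one still has $1+2\Re z_0=\tfrac12=2|z_0|$, so $\theta(z_0)=0$ and the argument goes through unchanged.
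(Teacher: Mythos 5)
Your proposal is correct and follows essentially the same route as the paper: conjugate weights on the two colour classes of $C_{2n}$, the $2\times2$ transfer matrix with $\Tr M=1+2\Re z$ and $\det M=|z|^2$, the observation that the discriminant changes sign exactly on $\partial R(1)$, and the choice of $n$ and $z$ making the eigenvalue argument equal to $\pi/(2n)$. The only (cosmetic) difference is that you finish with an intermediate-value argument for $\theta(z)$ along an arc, whereas the paper writes $z=a+b\ii$, fixes $a$, and solves $4b^2=((2a+1)\tan(\pi/2n))^2+4a+1$ explicitly, letting $n\to\infty$ so that $b\downarrow\sqrt{a+\tfrac14}$.
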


\begin{proof} 
Define $\bflambda:V(C_{2n})\to\mathbb{C}$ by 
$$
\lambda_i=\begin{cases}
\lambda,&\text{if $i$ is even;}\\ 
\mu,&\text{if $i$ is odd.}
\end{cases}
$$
A convenient way of keeping track of the calculation of $Z_{C_{2n,\bflambda}}$ is using a `transfer matrix'.  Consider a path $P_2$ of length~2 whose vertices, taken in order, have weights 1, $\mu$ and $\lambda$.  Define
$$
M=\begin{pmatrix}m_{00}&m_{01}\\m_{10}&m_{11}\end{pmatrix}=\begin{pmatrix}1&0\\0&\lambda\end{pmatrix} \begin{pmatrix}1&1\\1&0\end{pmatrix} \begin{pmatrix}1&0\\0&\mu\end{pmatrix} \begin{pmatrix}1&1\\1&0\end{pmatrix} =\begin{pmatrix}\mu+1&1\\\lambda&\lambda\end{pmatrix}.
$$
Note that $Z_{P_2,(1,\mu,\lambda)}=m_{00}+m_{01}+m_{10}+m_{11}$, and that $m_{00}$ is the contribution to the partition function from independent sets excluding both ends of $P_2$, that $m_{01}$ is the contribution of independent sets including the left endpoint of $P_2$ and excluding the right, and so on.  The transfer matrix for a path of length $2n$ with weights $(1,\mu,\lambda,\ldots,\mu,\lambda)$ is then simply $M^n$. Finally, the partition function for a cycle of length $2n$, being the path of length $2n$ with periodic boundary conditions, is $$Z_{C_{2n},\bflambda}=\Tr(M^n).$$
Write $M=P^{-1}\Lambda P$ with $\Lambda$ diagonal.  Then the partition function of the cycle may be written as 
$$
Z_{C_{2n},\bflambda}=\Tr(P^{-1}\Lambda^n P)=\Tr(\Lambda^n)=\alpha^n+\beta^n,
$$ 
where $\alpha,\beta$ are the eigenvalues of $\Lambda$. Note that to obtain a zero of the partition function we require $|\alpha|=|\beta|$ and that $|\arg(\alpha)-\arg(\beta)|$ divides $(2k+1)\pi$ for some $k\in\Nset$.

For simplicity assume $\mu=\lambdabar$.  The eigenvalues of $M$ are
$$
\alpha,\beta=\frac12\Big(\lambda+\mu+1\pm\sqrt{(\lambda+\mu+1)^2-4\lambda\mu}\Big)=
\frac12\Big(2\Re\lambda+1\pm\sqrt{(2\Re\lambda+1)^2-4|\lambda|^2}\Big).
$$
To ensure $|\alpha|=|\beta|$, we insist that 
\begin{equation}\label{eq:moduli}
4|\lambda|^2>(2\Re\lambda+1)^2;  
\end{equation}
and to ensure $\alpha^n+\beta^n=0$ we take 
\begin{equation}\label{eq:args}
\arg\alpha=\pi/2n=-\arg\beta. 
\end{equation}
 Let $\lambda=a+b\ii$, so that $\mu=a-b\ii$.  Then \eqref{eq:moduli} simplifies to
 \begin{equation}\label{eq:abcondition}
 4b^2>4a+1,
 \end{equation}
 and \eqref{eq:args} to 
 $$
 \tan\Big(\frac\pi{2n}\Big)=\frac{\sqrt{4b^2-4a-1}}{2a+1}
 $$
or
 \begin{equation}\label{eq:cycleroot}
 4b^2=\Big((2a+1)\tan\Big(\frac\pi{2n}\Big)\Big)^2+4a+1.
 \end{equation}
 Now, to find a pair of roots close to the parabola $R(1)$ with a desired real part~$a$, simply fix $a$ and take $n$ large.  As $n\to\infty$, the imaginary part $b$ tends to $\sqrt{a+\frac14}$ from above.  
 \end{proof}

Summarising, $R(1)$ is a multivariate zero-free region for even length cycles, and is the unique maximal such region that is symmetric around the real axis in the following sense: for any point $z$ on the boundary of $R(1)$, and $\epsilon>0$, the set $R(1)\cup B(z,\epsilon)\cup B(\bar z,\epsilon)$ is not multivariate zero free.  Note that there is no reason a priori to suppose that a unique maximal zero-free region exists, and indeed we shall encounter counterexamples later.

\subsection{Blow-ups of cycles (dense)}
Here we generalise the previous example to graphs of higher degree (and larger cliques) to show that Theorem~\ref{thm:almostclawfree} gives a zero-free region that is close to best possible.

By $C_{2n}[K_s]$ we denote the lexicographic product of a cycle of length $2n$ and a clique of size~$s\geq2$.  Thus, the vertex set of $C_{2n}[K_s]$ is $[2n]\times[s]$ and each vertex subset $\{i,i+1\}\times[s]$ (with arithmetic modulo $2n$) induces a $2s$-clique $K_{2s}$.  There are no further edges.  It is easy to check that $C_{2n}[K_s]$ is claw free;  indeed, $G[K_s]$ is claw free for any claw-free graph~$G$ and integer $s\geq2$.

Although $C_{2n}[K_s]$ is not a line graph (of a simple graph), it \textit{is} the line graph of a multigraph and so is amenable to the method of Asano contractions (see Theorem~\ref{thm:LRSmultivariate} and appendix).  As the cliques that comprise $C_{2n}[K_s]$ have size $2s$, this approach yields $R(s^2)$ as a multivariate zero-free region.  (Refer to Theorem~\ref{thm:LRSmultivariate}.) Alternatively, since any maximum clique in $C_{2n}[K_s]$ has size $2s$ and is simplicial, our Theorem~\ref{thm:almostclawfree} yields $R(2s)$ as a zero-free region.  
The result below shows that Theorem~\ref{thm:almostclawfree} is within a factor of $2$ of being best possible (i.e.\ we cannot replace $R(k)$ with $R(k')$ in the statement if $k'< k/2$).

\begin{lemma}
$R(s)$ is a multivariate zero-free region for the graph $C_{2n}[K_s]$, for all $s\geq2$.  It is the maximal such region that is symmetric about the real axis.     
\end{lemma}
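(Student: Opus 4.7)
\smallskip
\noindent\textbf{Proof proposal.}
The main idea is a reduction from $C_{2n}[K_s]$ back to the cycle $C_{2n}$. Any independent set $S$ of $C_{2n}[K_s]$ picks at most one vertex from each clique $\{i,i+1\}\times[s]$; in particular, at most one vertex from each slot $\{i\}\times[s]$, and consecutive slots cannot both contribute a vertex. Hence $S$ is determined by an independent set $T\in\calI(C_{2n})$ together with a choice, for each $i\in T$, of one of the $s$ vertices in slot~$i$. Setting $\Lambda_i=\sum_{j=1}^s\lambda_{(i,j)}$, this gives the identity
\[
Z_{C_{2n}[K_s],\bflambda}\;=\;\sum_{T\in\calI(C_{2n})}\prod_{i\in T}\Lambda_i\;=\;Z_{C_{2n},\bfLambda}.
\]

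For zero-freeness I would now show that $R(s)$ is mapped into $R(1)$ under summation of $s$~values. Writing $z_j=x_j+y_j\ii\in R(s)$, the defining inequality is $sy_j^2<x_j+\tfrac{1}{4s}$. Summing over~$j$ and applying Cauchy--Schwarz,
\[
\Big(\sum_j y_j\Big)^2\le s\sum_j y_j^2< \sum_j x_j+\tfrac14,
\]
so $\sum_j z_j\in R(1)$. Thus if $\lambda_{(i,j)}\in R(s)$ for all $(i,j)$, then $\Lambda_i\in R(1)$ for every~$i$. Since $C_{2n}$ is a line graph whose largest clique has size~$2$, Theorem~\ref{thm:LRSmultivariate} with $k=2$ (so $k^2/4=1$) gives $Z_{C_{2n},\bfLambda}\neq0$, and the displayed identity yields $Z_{C_{2n}[K_s],\bflambda}\neq0$.

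For the maximality claim, I would use the rescaling $z\mapsto sz$. It is immediate from the definitions that this map sends $\partial R(s)$ bijectively to $\partial R(1)$: if $y_0^2=\tfrac{x_0}{s}+\tfrac{1}{4s^2}$ then $(sy_0)^2=sx_0+\tfrac14$. Given $z_0\in\partial R(s)$ and $\epsilon>0$, apply Lemma~\ref{lem:C2nnegative} at the boundary point $sz_0\in\partial R(1)$ with tolerance $s\epsilon$ to obtain $n\in\Nset$, $w\in B(sz_0,s\epsilon)$, and a weighting $\bfLambda:V(C_{2n})\to\{w,\wbar\}$ with $Z_{C_{2n},\bfLambda}=0$. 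Lifting by assigning the uniform weight $\lambda_{(i,j)}=\Lambda_i/s$ inside each slot (so all $s$ vertices in slot~$i$ receive $w/s$ or $\wbar/s$), the reduction formula yields $Z_{C_{2n}[K_s],\bflambda}=Z_{C_{2n},\bfLambda}=0$, with all individual weights lying in $B(z_0,\epsilon)\cup B(\zbar_0,\epsilon)$.

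I do not anticipate a significant obstacle; the two potentially finicky points are verifying that the reduction $Z_{C_{2n}[K_s],\bflambda}=Z_{C_{2n},\bfLambda}$ uses only the structural fact that each $2s$-clique $\{i,i+1\}\times[s]$ can contribute at most one vertex, and checking that the strict inequality defining $R(s)$ is preserved under the Cauchy--Schwarz step (which it is, since both the hypothesis and Cauchy--Schwarz are strict except in the degenerate all-equal real case, which is handled by the output strict inequality).
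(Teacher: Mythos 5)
Your proposal is correct and follows essentially the same route as the paper: the same reduction $Z_{C_{2n}[K_s],\bflambda}=Z_{C_{2n},\bflambda'}$ with $\lambda'_i=\sum_j\lambda_{ij}$, the same Cauchy--Schwarz argument showing $R(s)$ sums into $R(1)$, and the same rescaling $z\mapsto sz$ combined with Lemma~\ref{lem:C2nnegative} for maximality. No gaps.
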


\begin{proof}
Let $\bflambda:[2n]\times[s]\to \Cset$ be a labelling of the vertices of $C_{2n}[K_s]$.  Define $\bflambda':[2n]\to\Cset$ by $\lambda'_i=\sum_{j\in[s]}\lambda_{ij}$, for $i\in[2n]$.  We view $\bflambda'$ as a labelling of $C_{2n}$.  It is easy to verify that
$Z_{C_{2n}[K_s],\bflambda}=Z_{C_{2n},\bflambda'}$.  (Regard two independent sets in $C_{2n}[K_s]$ as equivalent if they intersect the same collection of sets of the form $\{i\}\times[s]$; now collect together contributions to the partition function from equivalent independent sets.)  Noting that $R(1)$ is a zero-free region for $C_{2n}$, we see that to establish the first part of the lemma it is enough to show that $\lambda_{ij}\in R(s)$ for all $j\in[s]$ implies $\lambda'_i\in R(1)$.

Fix $i\in[2n]$.  Since $\lambda_{ij}\in R(s)$ for all $j\in[s]$ we have 
$$
(\Im\lambda_{ij})^2<\frac1s\Re\lambda_{ij}+\frac1{4s^2},\quad\text{for all $j\in[s]$}.
$$
Summing over $j$, 
\begin{equation}\label{eq:R(1)}
\sum_{j\in[s]}(\Im\lambda_{ij})^2<\frac1s\Re\lambda'_{i}+\frac1{4s}.
\end{equation}
By Cauchy-Schwarz,
$$
(\Im\lambda'_i)^2
=\Bigg(\sum_{j\in[s]}1\cdot\Im\lambda_{ij}\Bigg)^2\leq \sum_{j\in[s]}1^2\sum_{j\in[s]}(\Im\lambda_{ij})^2=s\sum_{j\in[s]}(\Im\lambda_{ij})^2.
$$
Combining this with \eqref{eq:R(1)} we see that $\lambda'_i\in R(1)$. 

Optimality of the zero-free region $R(s)$ may be argued as follows.  Suppose $z_0\in\partial R(s)$ and $\epsilon>0$.  Note that $sz_0\in\partial R(1)$.  By Lemma~\ref{lem:C2nnegative}, there exist $z\in B(sz_0,s\epsilon)$ and $\bflambda':[2n]\to\{z,\zbar\}$ such that $Z_{C_{2n},\bflambda'}=0$.  Now let $\bflambda=\bflambda'/s$. Then $z/s\in B(z_0,\epsilon)$, $\bflambda:[2n]\to\{z/s,\zbar/s\}$ and $Z_{C_{2n}[K_s],\bflambda}
=Z_{C_{2n},\bflambda'}=0$. 
\end{proof}

\subsection{Paths with longer range interactions}
The following are examples of graphs in $\mathcal{G}^{\textup{Cl-S}}$ that are not line graphs thus showing that Theorem~\ref{thm:almostclawfree} applies to graphs that Theorem~\ref{thm:LRSmultivariate} does not apply to.

For $n,d\geq1$, let $P_n^{(d)}$ denote the path of length $n$ with adjacencies up to distance~$d$.  So $V(P_n^{(d)})=[n]$ and $E(P_n^{(d)})=\{ij:i,j\in[n]\text{ and }1\leq j-i\leq d\}$.  Assume that $d\geq2$. Then for $n$ sufficiently large, the graph $P_n^{(d)}$ is not the line graph of a multigraph. (As the property is hereditary, we can take $n=3d$ without loss of generality. Any line graph can be covered by cliques in such a way that each vertex is in at most two cliques.
Assume that the edges of $P_{3d}^{(d)}$ could be covered by such cliques. In particular,  the edge $\{d,2d\}$ must be in some such clique~$K$.  The vertex $d$ has degree $2d$, so to cover all its incident edges using two cliques requires $K$ to be of maximum cardinality, i.e., $K=\{d,d+1,\ldots,2d\}$.  This in turn forces us to take the cliques $\{0,\ldots,d\}$, $\{d,\ldots,2d\}$ and $\{2d,\ldots,3d\}$.  But now it is impossible to cover (say) the edges $\{1,d+1\}$ and $\{d+1,2d+1\}$ using a single extra clique.)  Thus, this example is not amenable to the Asano contraction approach.

However, $P_n^{(d)}$ is claw free and has a simplicial clique, for example, the singleton vertex $\{0\}$.  Its maximum clique size is $d+1$, so by Theorem~\ref{thm:almostclawfree} it has $R(d+1)$ as a multivariate zero-free region.  

\subsection{Blow-ups of cycles (sparse)}
By $C_{2n}[sK_1]$ we denote the lexicographic product of a cycle of length $2n$ and an independent set of size~$s\geq2$.  Thus, the vertex set of $C_{2n}[sK_1]$ is $[2n]\times[s]$ and each pair of vertex subsets $\{i\}\times[s]$ and $\{i+1\}\times[s]$ (with arithmetic modulo $2n$) induces a complete bipartite graph $K_{s,s}$.  There are no further edges.  It is easy to check that $C_{2n}[sK_1]$ is E-free (and hence $S_{i,j,k}$-free for $i \geq 1$ and $j,k \geq 2$); indeed, for any E-free graph~$G$ and integer $s\geq2$, the graph $G[sK_1]$ is also E-free   

Since $C_{2n}[2K_1]$ is E-free, we deduce from Theorem~\ref{thm:zerofreeinterval} the existence of an open multivariate zero-free region containing any finite segment of the positive real axis.  We cannot, however, deduce the existence of an open zero-free region containing the entire real axis. In fact, we now show that no such region exists.

\begin{lemma}
Consider the collection $\mathcal{C}=\{C_{2n}[2K_1]:n\in\Nset\}$ of blown-up cycles defined above.  There is no open multivariate zero-free region for $\mathcal C$ that contains the positive real axis.
\end{lemma}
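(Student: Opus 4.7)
The plan is to reduce the multivariate partition function of $C_{2n}[2K_1]$ to one for the cycle $C_{2n}$ with derived weights, in the same spirit as the reduction used for $C_{2n}[K_s]$ in the previous subsection. First I would verify by classifying independent sets that
\[
Z_{C_{2n}[2K_1],\bflambda} = Z_{C_{2n},\bfmu},
\]
where $\mu_i=(1+\lambda_{i,1})(1+\lambda_{i,2})-1$. This holds because an independent set in $C_{2n}[2K_1]$ is a choice of subset $S_i\subseteq\{(i,1),(i,2)\}$ at each slot, constrained so that no two adjacent slots carry nonempty $S_i$, and summing $\prod_{v\in S_i}\lambda_v$ over the three nonempty choices of $S_i$ produces the factor $(1+\lambda_{i,1})(1+\lambda_{i,2})-1$.

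Given an open region $F\supseteq[0,\infty)$, I would fix $\delta>0$ with $B(0,\delta)\subseteq F$, choose a large parameter $\lambda_0>0$, and assign weights $\lambda_{i,1}=\lambda_0$ for every $i$ together with $\lambda_{i,2}=\ii y$ for even $i$ and $\lambda_{i,2}=-\ii y$ for odd $i$, where $y>0$ is to be determined. Since $1+\lambda_0$ is real, this makes $\mu_i$ alternate between $\mu=\lambda_0+\ii(1+\lambda_0)y$ and $\bar\mu$, which is exactly the alternating pattern analysed in Lemma~\ref{lem:C2nnegative}. The zero equation derived in the proof of that lemma, namely $4B^2=(2A+1)^2\tan^2(\pi/2n)+4A+1$ with $A=\lambda_0$ and $B=(1+\lambda_0)y$, determines a unique solution $y=y_n>0$ for each $n\geq2$, and $y_n\to\sqrt{4\lambda_0+1}/(2(1+\lambda_0))$ as $n\to\infty$, which is of order $1/\sqrt{\lambda_0}$.

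To finish, I would choose $\lambda_0$ so large that $\sqrt{4\lambda_0+1}/(2(1+\lambda_0))<\delta/2$, and then $n$ so large that $y_n<\delta$. All the weights then lie in $F$, since $\lambda_{i,1}=\lambda_0\in[0,\infty)\subseteq F$ and $\lambda_{i,2}\in\{\pm \ii y_n\}\subseteq B(0,\delta)\subseteq F$, while $Z_{C_{2n}[2K_1],\bflambda}=Z_{C_{2n},\bfmu}=0$ by construction. This contradicts the existence of an open multivariate zero-free region for $\mathcal{C}$ containing $[0,\infty)$, and \emph{a fortiori} any region containing the whole real axis.

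The main conceptual obstacle, and the reason an asymmetric assignment between $\lambda_{i,1}$ and $\lambda_{i,2}$ is essential, is that the parabolic-boundary zeros of the cycle $\bfmu$-partition function sit at imaginary height of order $\sqrt{\lambda_0}$ above the positive real axis, which is far outside any thin tube around $[0,\infty)$. The factorisation $1+\mu=(1+\lambda_{i,1})(1+\lambda_{i,2})$ however lets one concentrate the real mass in one slot vertex and the imaginary excursion in the other: placing the large real factor $\lambda_0$ in $\lambda_{i,1}$ divides the imaginary part by $|1+\lambda_0|\approx\lambda_0$, so $\lambda_{i,2}$ ends up at imaginary height of order $1/\sqrt{\lambda_0}$, which fits inside any open neighbourhood of $0$ once $\lambda_0$ is large.
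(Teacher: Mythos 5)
Your proposal is correct and follows essentially the same route as the paper: reduce $C_{2n}[2K_1]$ to the cycle via $\mu_i=(1+\lambda_{i,1})(1+\lambda_{i,2})-1$, place the large real factor in one blow-up vertex and a small conjugate imaginary perturbation in the other, and invoke the zero equation \eqref{eq:cycleroot} from Lemma~\ref{lem:C2nnegative} to land a zero with all weights near $[0,\infty)$. The only (immaterial) difference is that you centre the imaginary perturbation at $0$ ($\lambda_{i,2}=\pm\ii y$) whereas the paper centres it at $1$ ($w_\epsilon=1\pm\epsilon\ii$); the order of quantifiers and the contradiction are the same.
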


\begin{proof}
Given constants $\epsilon>0$ and $W>0$,  define a weighting $\bflambda^{\epsilon, W}:[2n]\times[2]\to\Cset$ of the vertices of $C_{2n}[2K_1]$ by
$$
\lambda_{ij}^{\epsilon, W}=\begin{cases}
    W,&\text{if $j=0$;}\\
    w_\epsilon=1+\epsilon\ii,&\text{if $i$ is even and $j=1$};\\
    \bar{w}_\epsilon=1-\epsilon\ii,&\text{if $i$ is odd and $j=1$}.
\end{cases}
$$
  As before, by grouping equivalent independent sets, we can write down an equivalent (in the sense of preserving the partition function) weighting $\bfmu^{\epsilon, W}$ of the simple cycle $C_{2n}$:
$$
\mu^{\epsilon, W}_i=\begin{cases}
    z_{\epsilon, W} := (W+1)(w_\epsilon+1)-1=(2W+1)+(W+1)\epsilon\ii,&\text{if $i$ is even;}\\
    \bar{z}_{\epsilon, W} := (W+1)(\bar{w}_\epsilon+1)-1=(2W+1)-(W+1)\epsilon\ii,&\text{if $i$ is odd.}
\end{cases}
$$
We then have $Z_{C_{2n}[2K_1],\bflambda^{\epsilon, W}}=Z_{C_{2n},\bfmu^{\epsilon, W}}$.

With a view to obtaining a contradiction, assume that $S$ is an open multivariate zero-free region for $\mathcal{C}$ that contains the positive real axis.  Choose $\delta>0$ such that the interval $[1-\delta\ii,1+\delta\ii]$ is contained in~$S$. We will check that there exists $W>0$, $ \epsilon \in (0, \delta]$, and $n \in \mathbb{N}$ such that $Z_{C_{2n},\bfmu^{\epsilon, W}} = 0$ and hence $Z_{C_{2n}[2K_1],\bflambda^{\epsilon, W}}$ giving the contradiction.

Consider the set
\begin{align*}
T_{\delta} = \{ z_{\epsilon, W}: \epsilon \in (0, \delta], \; W>0 \} 
&= \{(2W+1)+(W+1)\epsilon\ii: \; \epsilon \in (0,  \delta], \; W>0\} \\
&= \{x+ \ii y: 0 < y < \tfrac{1}{2} \delta(x+1), \; x>1 \}.
\end{align*}
It is easy to check that $T_{\delta}$ contains a (unbounded) part of the parabola $\partial R(1)$. Lemma~\ref{lem:C2nnegative} then implies that there exists $z_{\epsilon, W} \in T_{\delta}$ and $n \in \mathbb{N}$ such that alternately weighting the vertices of $C_{2n}$ with $z_{\epsilon, W}$ and $\bar{z}_{\epsilon, W}$ gives a zero of $Z_{C_{2n}}$, i.e.\ $Z_{C_{2n},\bfmu^{\epsilon, W}} = 0$.   
\end{proof}

\subsection{Complete multipartite graphs}
In this subsection, we show that we cannot drop the bounded degree condition in Theorem~\ref{thm:univariate} and Theorem~\ref{thm:zerofreeinterval}.
For $a,b,n,m \in \mathbb{N}$, let $K(a,b;n,m)$ be the graph that has $an + bm$ vertices which are divided into $a$ sets with $n$ vertices and a further $b$ sets of $m$ vertices and where all edges are present between vertices in different sets and no edges are present inside any of the sets. So this is a complete multipartite graph. Note that $K(a,b;n,m)$ is $S_{i,j,k}$-free for any choice of positive integers $i,j,k$ except for $i=j=k=1$, i.e., the graph may contain claws but does not contain any other subdivided claw.

One can easily verify that the (univariate) independence polynomial of $K(a,b; n,m)$ is given by 
\[
Z_{K(a,b;n,m), \lambda} = a[(1+\lambda)^n - 1] + b[(1+ \lambda)^m - 1)] + 1 = a(1+\lambda)^n + b(1+\lambda)^m + (1-a-b).
\]
From the following lemma, it immediately follows that the roots of such polynomials are dense in the complex plane except possibly inside a disc of radius $1$ centered at $-1$. This shows in particular that zeros accumulate on the positive real line and so we cannot expect a zero-free region around the positive real line (or any section of it) for the general class of $S_{i,j,k}$-free graphs where $i,j \geq 1$ and $k \geq 2$.

\begin{lemma}
For the set of polynomials of the form $ax^{2n} + bx^n + (1-a-b)$, where $a,b,n \in \mathbb{N}$, the roots are dense outside the unit disc. 
\end{lemma}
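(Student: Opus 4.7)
The plan is to exhibit an explicit family of roots of these polynomials that is dense in $\{z\in\mathbb{C}:|z|>1\}$. Write $p_{a,b,n}(x)=ax^{2n}+bx^n+(1-a-b)$. Substituting $y=x^n$ reveals that $p_{a,b,n}$ factors as
\[
p_{a,b,n}(x)=a(x^n-y_+)(x^n-y_-),\qquad y_\pm=\frac{-b\pm\sqrt{b^2+4a(a+b-1)}}{2a},
\]
where $y_\pm$ are the roots of the quadratic $ay^2+by+(1-a-b)$. For $a,b\in\mathbb{N}$ with $a,b\geq 1$ one checks that $y_-$ is a negative real with $|y_-|>1$ (the inequality reduces after squaring to the trivial $2b>1$, after disposing of the case $b\geq 2a$). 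Consequently the roots of $p_{a,b,n}$ arising from the factor $x^n-y_-$ are
\[
\alpha_{a,b,n,k}=|y_-|^{1/n}\exp\!\bigl(\ii(2k+1)\pi/n\bigr),\qquad k=0,1,\ldots,n-1,
\]
each of modulus $|y_-|^{1/n}>1$.

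The main lemma is that $\{|y_-(a,b)|:a,b\in\mathbb{N},\ a,b\geq 1\}$ is dense in $[1,\infty)$. Treating $b$ as a real variable, a direct computation gives
\[
\frac{\partial|y_-(a,b)|}{\partial b}=\frac{1}{2a}\left(1+\frac{b+2a}{\sqrt{(b+2a)^2-4a}}\right),
\]
which is bounded uniformly over $a,b\geq 1$ by a constant multiple of $1/a$, since the ratio $(b+2a)/\sqrt{(b+2a)^2-4a}=1/\sqrt{1-4a/(b+2a)^2}$ is at most $3/\sqrt 5$. On the other hand, $|y_-(a,1)|=(1+\sqrt{4a^2+1})/(2a)\downarrow 1$ as $a\to\infty$. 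Hence, for any target $r>1$ and tolerance $\eta>0$, taking $a$ large enough makes the integer-$b$ values $|y_-(a,b)|$ start arbitrarily close to $1$ and increase to $\infty$ in steps smaller than $\eta$, so some value falls within $\eta$ of $r$.

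To approximate a given $z_0=r_0 e^{\ii\theta_0}$ with $r_0>1$ to within $\epsilon>0$, first choose $n$ with $\pi r_0/n<\epsilon/2$. The angles $(2k+1)\pi/n$ for $k=0,\ldots,n-1$ are automatically $(\pi/n)$-dense on $[0,2\pi)$, so some $k$ satisfies $|(2k+1)\pi/n-\theta_0|\leq\pi/n$. Next, invoking the density statement above together with continuity of $t\mapsto t^{1/n}$ near $t=r_0^n$, pick $a,b$ so that $\bigl||y_-(a,b)|^{1/n}-r_0\bigr|<\epsilon/2$. The triangle inequality then yields
\[
|\alpha_{a,b,n,k}-z_0|\leq\bigl||y_-|^{1/n}-r_0\bigr|+r_0\bigl|(2k+1)\pi/n-\theta_0\bigr|<\epsilon.
\]
The only step requiring genuine work is the density claim for $\{|y_-(a,b)|\}$; this is an elementary calculus exercise and is, arguably, the main obstacle. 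No Diophantine or equidistribution input is needed, because the angular density is automatic as $n$ grows.
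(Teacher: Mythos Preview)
Your proof is correct and follows the same overall strategy as the paper: substitute $y=x^n$, identify the negative real quadratic root $y_-$ with $|y_-|>1$, establish density of these roots in $(-\infty,-1)$ (equivalently, of $|y_-|$ in $(1,\infty)$), and then use $n$th roots for angular coverage outside the unit disc.

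The only real differences are in execution. For the density step, the paper fixes a positive rational $t$, sets $a=L$ and $b=tL$, and computes directly that one quadratic root equals $-t-1+O(L^{-1})$; density follows from density of the rationals. Your derivative/mean-value argument (the increments in $b$ have size $O(1/a)$) is a slightly different route to the same conclusion. For the final step the paper argues in the reverse direction---mapping a small neighbourhood of $z_0$ by $z\mapsto z^N$ and showing the resulting annulus meets $(-\infty,-1)$ and hence contains a quadratic root---whereas you construct the root explicitly and bound its distance to $z_0$; these are dual viewpoints of equal strength.
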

Note that there are zeros inside the unit disc, but we do not describe them here.
\begin{proof}
First consider quadratics of the form $ax^2 + bx + (1-a-b)$, where $a$ and $b$ are positive integers. We claim that the roots of such quadratics are dense in the interval $(-\infty, -1)$. Indeed, given any positive rational $t$, we can find a root arbitrarily close to $-1-t$ as follows. Take $L$ to be a very large positive integer so that $b=tL$ is an integer and set $a=L$. Then one of the roots of $ax^2 + bx + (1-a-b)$ is
\begin{align*}
-\frac{b}{2a} -  \frac{(b^2 - 4a(1-a-b))^{1/2}}{2a} 
= -\frac{t}{2} - \left( \left( \frac{t}{2}+1 \right)^2 - \frac{1}{L} \right)^{1/2} 
= -t-1 + O(L^{-1}).
\end{align*}
proving the claim.

Next, given any $z = re^{\ii\theta}$ with $r>1$ and $\varepsilon > 0$, consider a small neighbourhood of $z$ given by
\[
N_{\varepsilon}(z) = \{r'e^{\ii\theta'} : |r-r'| \leq \varepsilon, |\theta - \theta'| \leq \varepsilon/r \},
\]
which is easily checked to be contained inside a disc of radius $3 \varepsilon$ centered at $z$. For a sufficiently large positive integer $N$ (we can take $N \geq \pi r\varepsilon^{-1}$ ), the set $N_{\varepsilon}(z)^N$ (raising each element in $N_{\varepsilon}(z)$ to the power $N$) is an annulus between the circles of radius $(r-\varepsilon)^N$ and $(r+\varepsilon)^N$ and in particular crosses the interval $(-\infty, -1)$ and so contains a root of $Ax^2 + Bx + (1-A-B)$ for some positive integers $A,B$. Therefore, some complex number within a distance $3\varepsilon$ of $z$ is a root of $Ax^{2N} + Bx^N + (1 -A - B)$.
\end{proof}

\section{The case when $H$ is neither a subdivided claw nor a path}
\label{sec:notclaw}


Recall that Theorem~\ref{thm:univariate} and Theorem~\ref{thm:zerofreeinterval} give a zero-free region for the independence polynomials of $H$-free graphs of fixed maximum degree $\Delta$, where $H$ is a fixed subdivided claw $S_{i,j,k}$. Note that similar results hold when $H = P_k$ is a path of some fixed length~$k$ for the trivial reason that there are only finitely many $P_k$-free graphs of maximum degree~$\Delta$.
In this section, we show that no similar results can hold for any graph~$H$ that is not a subdivided claw or a path. 

Given a graph $H$ which is neither a path nor a subdivided claw, then either $H$ has a vertex of degree at least $4$ or $H$ has (at least) two vertices of degree $3$.
In the latter case, set $k\in\Nset$ to be larger than the distance between these vertices of degree $3$ and in the former case, set $k=0$.
Let $\mathcal T=\{T_d:d\in\Nset\}$ be the family of trees, where $T_d$ is obtained from the complete binary tree of height~$d$ by subdividing each edge $2k$ times (so there are $2k$ intermediate vertices between branch vertices). Refer to Figure~\ref{fig:trees}.  By our choice of $k$, all trees in $\mathcal{T}$ are $H$-free. 

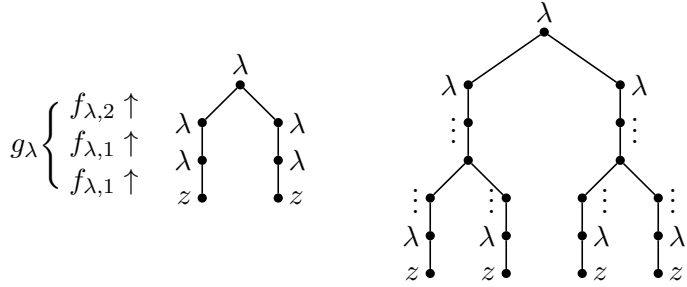
\begin{figure}
\begin{tikzpicture}[scale=1, semithick]

   \draw (-3,1) node [vertex] (k1) {} node [left] {$z$};
   \draw (-2,1) node [vertex] (k2) {} node [right] {$z$};
   \draw (-3,1.5) node [vertex] (j1) {} node [left] {$\lambda$};
   \draw (-2,1.5) node [vertex] (j2) {} node [right] {$\lambda$};
   \draw (-3,2) node [vertex] (i1) {} node [left] {$\lambda$};
   \draw (-2,2) node [vertex] (i2) {} node [right] {$\lambda$};
   \draw (-2.5,2.5) node [vertex] (h) {} node [above] {$\lambda$};
   \draw (k1) -- (j1) -- (i1) -- (h);
   \draw (k2) -- (j2) -- (i2) -- (h);
   \draw (-3.7,1.2) node [left] {$f_{\lambda,1}\uparrow$};
   \draw (-3.7,1.7) node [left] {$f_{\lambda,1}\uparrow$};
   \draw (-3.7,2.2) node [left] {$f_{\lambda,2}\uparrow$};
   \draw (-4.7,1.7) node [left] {$g_\lambda\Bigg\{$};
      
   \draw (0,0) node [vertex] (g1) {} node [left] {$z$};
   \draw (1,0) node [vertex] (g2) {} node [left] {$z$};
   \draw (0,0.5) node [vertex] (f1) {} node [left] {$\lambda$};
   \draw (1,0.5) node [vertex] (f2) {} node [left] {$\lambda$};
   \draw (0,1) node [vertex] (e1) {} node [left] {\strut$\vdots$};
   \draw (1,1) node [vertex] (e2) {} node [left] {\strut$\vdots$};
   \draw (0.5,1.5) node [vertex] (d1) {};
   \draw (g1) -- (f1) -- (e1) -- (d1);
   \draw (g2) -- (f2) -- (e2) -- (d1);
   
   \draw (2,0) node [vertex] (g3) {} node [right] {$z$};
   \draw (3,0) node [vertex] (g4) {} node [right] {$z$};
   \draw (2,0.5) node [vertex] (f3) {} node [right] {$\lambda$};
   \draw (3,0.5) node [vertex] (f4) {} node [right] {$\lambda$};
   \draw (2,1) node [vertex] (e3) {} node [right] {\strut\;$\vdots$};
   \draw (3,1) node [vertex] (e4) {}  node [right] {\strut$\vdots$};
   \draw (2.5,1.5) node [vertex] (d2) {};
   \draw (g3) -- (f3) -- (e3) -- (d2);
   \draw (g4) -- (f4) -- (e4) -- (d2);

   \draw (0.5,2) node [vertex] (c1) {} node [left] {$\vdots$};
   \draw (2.5,2) node [vertex] (c2) {} node [right] {$\vdots$};
   \draw (0.5,2.5) node [vertex] (b1) {} node [left] {$\lambda$};
   \draw (2.5,2.5) node [vertex] (b2) {} node [right] {$\lambda$};
   \draw (1.5,3.2) node [vertex] (a) {} node [above] {$\lambda$};
   \draw (d1) -- (c1) -- (b1) -- (a);
   \draw (d2) -- (c2) -- (b2) -- (a);

\end{tikzpicture}
\caption{The trees $T_1$ and  $T_2$, when $k=1$.}
\label{fig:trees}
\end{figure}



\begin{lemma}\label{lem:notclaw}
The set of zeros
$$
\big\{\lambda\in\Cset: Z_{T,\lambda}=0\text{ for some }T\in\mathcal T\big\}
$$
has an accumulation point on the positive real axis.
\end{lemma}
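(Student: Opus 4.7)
The plan is to analyse the standard hard-core tree recursion and use a complex-dynamics argument to locate zeros accumulating on the positive real axis. For a vertex $v$ of $T_d$, write $R_v(\lambda) = Z_v^{\text{in}}/Z_v^{\text{out}}$ for the usual ratio of hard-core partition functions of the subtree rooted at~$v$, counting only independent sets that contain (respectively, omit) $v$. Setting $f_\lambda(R) = \lambda/(1+R)$, the standard rules give $R_\ell = \lambda$ at a leaf $\ell$, $R_v = f_\lambda(R_u)$ at a degree-$2$ vertex with unique child~$u$, and $R_v = \lambda/[(1+R_{u_1})(1+R_{u_2})]$ at a branching vertex with children $u_1,u_2$.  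By the evident symmetry of $T_d$, all vertices at a common depth share the same ratio, and the ratio at the root equals $F_d(\lambda) := \phi_\lambda^{\circ d}(\lambda)$, where
\[
\phi_\lambda(R) \;=\; \frac{\lambda}{\bigl(1 + f_\lambda^{\circ 2k}(R)\bigr)^2}.
\]
The zeros of $Z_{T_d,\lambda}$ then coincide with the $\lambda$ at which $F_d(\lambda) = -1$, modulo the finitely many zeros of $Z^{\text{out}}_{\text{root}}$, which can be handled separately by clearing denominators.

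Next, I locate a real parameter at which the dynamics of $\phi_\lambda$ destabilises. For real positive $\lambda$, $\phi_\lambda\colon[0,\infty)\to(0,\infty)$ is smooth and strictly decreasing with $\phi_\lambda(0)>0$, so has a unique positive fixed point $R^*(\lambda)$. An elementary chain-rule computation shows that the multiplier $\mu(\lambda) := \phi'_\lambda(R^*(\lambda))$ is strictly negative, depends continuously on~$\lambda$, and satisfies $\mu(\lambda)\to-\infty$ as $\lambda\to\infty$.  Hence there exists a critical value $\lambda_c>0$ with $\mu(\lambda_c) = -1$: a period-doubling bifurcation. For $\lambda_0$ slightly above $\lambda_c$, $\phi_{\lambda_0}$ has a repelling fixed point together with an attracting real $2$-cycle $\{R_1,R_2\}\subset(0,\infty)$, and since the initial point $F_0(\lambda_0)=\lambda_0$ lies in the basin of this $2$-cycle, the real sequence $F_d(\lambda_0)$ does not converge but oscillates, accumulating on both $R_1$ and $R_2$.

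Finally, I turn this dynamical instability into the existence of complex zeros accumulating on a positive real parameter.  Since $F_d(\lambda_0)$ has two distinct real accumulation points, the family of rational functions $\{F_d\}_{d\geq0}$ cannot converge uniformly on any neighbourhood of~$\lambda_0$ (else the identity principle would force a single limit), so the family fails to be normal at~$\lambda_0$.  By Montel's theorem, $\{F_d\}$ restricted to any neighbourhood $U$ of~$\lambda_0$ omits at most two complex values.  If $-1$ is not among these two exceptional values, it is attained by $F_d$ on~$U$ for infinitely many $d$, yielding zeros of $Z_{T_d,\lambda}$ accumulating at~$\lambda_0$.  The main obstacle is showing that the exceptional values of the non-normal family can be arranged to exclude $-1$; this is delicate but standard, and is achieved by perturbing $\lambda_0$ slightly within the interval $(\lambda_c,\infty)$ of ``post-bifurcation'' parameters — the exceptional pair depends essentially on the dynamical plane of $\phi_{\lambda_0}$, and generically cannot pin down the specific value $-1$ for all $\lambda_0$ in an interval. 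Any $\lambda_\star\in(\lambda_c,\infty)$ with $-1$ non-exceptional is then an accumulation point of zeros on the positive real axis, as required.
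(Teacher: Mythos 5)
Your setup (the occupation-ratio recursion, the map $\phi_\lambda$, which is the paper's $g_\lambda=f_{\lambda,2}\circ f_{\lambda,1}^{\circ 2k}$, and the reduction of zeros of $Z_{T_d,\lambda}$ to solutions of $\phi_\lambda^{\circ d}(\lambda)=-1$) agrees with the paper's. The rest of the argument has a genuine gap.

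The fatal step is your deduction of non-normality. You choose $\lambda_0$ strictly \emph{past} the period-doubling bifurcation, so that $\phi_{\lambda_0}$ has an attracting real $2$-cycle whose basin contains the starting point, and you argue that because the single orbit $F_d(\lambda_0)$ has two accumulation points, the family $\{F_d\}$ cannot be normal at $\lambda_0$. That inference is false: normality requires only that every subsequence admit a locally uniformly convergent sub-subsequence, not that the full sequence converge. In your regime the opposite of what you want happens: an attracting $2$-cycle, and membership of the marked point in its basin, are open conditions in the complex parameter $\lambda$, so $F_{2d}$ converges locally uniformly to one (holomorphically varying) branch of the $2$-cycle and $F_{2d+1}$ to the other. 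The family is therefore \emph{normal} (the parameter is passive) throughout the interior of the post-bifurcation regime, Montel gives you nothing, and no zeros need accumulate at such $\lambda_0$. Non-normality can only be extracted where the qualitative dynamics changes --- in particular at the bifurcation parameter itself, where the fixed point is indifferent. This is exactly why the paper's proof aims for an indifferent fixed point and then invokes Buys' Lemma~13, which does the genuinely non-trivial work of converting an indifferent fixed point into non-normality and then into accumulation of solutions of $F_d(\lambda)=-1$ (including the bookkeeping of Montel's exceptional values and of which trees realise the zeros). Your identity-principle remark cannot replace that lemma, and your treatment of the exceptional values (``delicate but standard\ldots perturbing $\lambda_0$'') is not an argument.

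A second, lesser error: you assert that the multiplier $\mu(\lambda)=\phi'_\lambda(R^*(\lambda))$ tends to $-\infty$ as $\lambda\to\infty$. It does not. Around the fixed point $z_\lambda\sim\lambda^{1/3}$ the chain rule produces alternating factors $f'_{\lambda,1}\sim-\lambda^{1/3}$ and $\sim-\lambda^{-1/3}$ which cancel in pairs, with a final factor $f'_{\lambda,2}\sim-2$; hence $\mu(\lambda)\to-2$. Your intermediate-value conclusion (existence of $\lambda_c$ with $\mu(\lambda_c)=-1$) survives because $-2<-1<0$, but this asymptotic computation is the technical core of the paper's proof and you have stated its conclusion incorrectly and without justification. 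In summary: the existence of the critical parameter needs the actual asymptotic analysis, and the passage from that parameter to accumulating complex zeros needs an argument of the type of Buys' Lemma~13 at the indifferent parameter, not the post-bifurcation normal-family argument you propose.
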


The lemma shows that, for any choice of $H$ that is not a subdivided claw or a path, there can be no zero-free region containing the positive real line for independence polynomials of $H$-free graphs of maximum degree $3$. Our proof closely follows the treatment of Buys~\cite{Buys} and so we give details where our proof differs from~\cite{Buys} and only outline the rest. 

\begin{proof}[Proof of Lemma \ref{lem:notclaw}]
The advantage of working with trees is that we have an inductive description of the partition function.  A particularly convenient way of managing the calculation is through the occupation ratio of the root of a tree:  the total weight of independent sets that include the root vertex divided by the total weight of independent sets that exclude it.  The recurrence for the occupation ratio was introduced into this area by Weitz~\cite{Weitz}, and can be found in earlier work, such as Kelly~\cite{Kelly}.  The derivation of the recurrence can be found in Buys~\cite{Buys}. 
Define 
$$
f_{\lambda,1}(z)=\frac\lambda{1+z},\> f_{\lambda,2}(z)=\frac\lambda{(1+z)^2}\>\text{ and }\> g_\lambda(z)=f_{\lambda,2}\circ f_{\lambda,1}^{\circ 2k}(z)=f_{\lambda,2}\circ \underbrace{f_{\lambda,1}\circ\cdots\circ f_{\lambda,1}}_\text{$2k$ copies}(z).
$$
The functions $f_{\lambda,1}$ and $f_{\lambda,2}$ express how the occupation ratio is transformed as we go one level up in the tree;  $f_{\lambda,1}$ (respectively, $f_{\lambda,2}$) relates to vertices with branching factor~1 (respectively, 2).  Refer to Figure~\ref{fig:trees}.  It follows that $g_\lambda^{\circ d}(\lambda)$ is the occupation ratio of the root of~$T_d$.   If $g_\lambda^{\circ d}(\lambda)=-1$ then the contributions from independent sets that include the root and those that exclude it exactly cancel out and we have $Z_{T_d,\lambda}=0$.  Following Buys, our initial  goal is to find a fugacity $\lambda_0\in\Rset_{>0}$ such that $g_{\lambda_0}$ has an indifferent fixed point~$z_{\lambda_0}$, that is to say $g_{\lambda_0}(z_{\lambda_0})=z_{\lambda_0}$ and $|g'_{\lambda_0}(z_{\lambda_0})|=1$.

Fix $\lambda\in\Rset_{>0}$.
Observe that $f_{\lambda,1}$ and $f_{\lambda,2}$ are monotonically decreasing as functions on $\Rset_{\geq0}$, and hence so is $g_\lambda$, being a composition of an odd number of these. As $g_\lambda$ is also non-negative on $\Rset_{\geq0}$ it intersects the identity function at a unique point in~$\Rset_{\geq0}$; call this point $z_\lambda$.  Since $g_0$ is identically zero, we have $z_0=0$ and $|g'_0(z_0)|=0$ . We will show that $|g'_\lambda(z_\lambda)|\to2$ as $\lambda\to\infty$ from which it follows, by continuity,\footnote{The fixed point $z=z_\lambda$ is defined by the implicit equation $t(\lambda,z)=0$, where $t(\lambda,z)=g_\lambda(z)-z$. Note that $\frac{\partial t}{\partial z}$ is at most $-1$ and, in particular is non-zero.  Now apply the implicit function theorem.} 
that there exists at least one $\lambda\in\Rset_{>0}$ at which $|g'_\lambda(z_\lambda)|=1$.  This will give us the sought for indifferent fixed point.

As $k$ is constant, and we are taking $\lambda$ to be sufficiently large, we can approximate the occupation ratio of the root of~$T_1$ --- with the two leaves weighted~$z$, and the remaining vertices weighted~$\lambda$ --- by enumerating just the maximum independent sets.  Then the occupation ratio of the root is $g_\lambda(z)=w_\mathrm{in}/w_\mathrm{out}$, where 
\begin{align*}
w_\mathrm{in}&=\big(\lambda^{2k+1}+O(\lambda^{2k})\big)+2z\big(k\lambda^{2k}+O(\lambda^{2k-1})\big)+z^2\big(k^2\lambda^{2k-1}+O(\lambda^{2k-2})\big)\\
&=\lambda^{2k-1}(\lambda+kz)^2(1+ O(\lambda^{-1})),\\
\noalign{\noindent and}
w_\mathrm{out}&=\big((k+1)^2\lambda^{2k}+O(\lambda^{2k-1})\big)+2z\big((k+1)\lambda^{2k}+O(\lambda^{2k-1})\big)+z^2\big(\lambda^{2k}+O(\lambda^{2k-1})\big)\\
&=\lambda^{2k}(z+k+1)^2(1+ O(\lambda^{-1})).
\end{align*}
Thus
$$
g_\lambda(z)=\frac{(\lambda+kz)^2}{\lambda(z+k+1)^2}(1\pm O(\lambda^{-1})).
$$

With a view to verifying that $g_\lambda$ has a fixed point $z_\lambda$ close to $\lambda^{1/3}$, let $\epsilon>0$ and define $z^\pm_\lambda=(1\pm\epsilon)\lambda^{1/3}$.   Then
\begin{align*}
g_\lambda(z^+_\lambda)&=\frac{(\lambda+ O(\lambda^{1/3}))^2(1+ O(\lambda^{-1}))}{\lambda\big((1+\epsilon)\lambda^{1/3}+ O(1)\big)^2}\\
&=\frac{\lambda^2(1+ O(\lambda^{-2/3}))}{(1+\epsilon)^2\lambda^{5/3}(1+ O(\lambda^{-1/3}))}\\
&=(1+\epsilon)^{-2}\lambda^{1/3}(1+ O(\lambda^{-1/3})).
\end{align*} 
Thus, provided $\lambda\gg\epsilon^{-3}$, we have $z^+_\lambda-g_\lambda(z_\lambda^+)>0$ and $z^-_\lambda-g_\lambda(z_\lambda^-)<0$.  It follows that $g_\lambda$ has a fixed point $z_\lambda$ satisfying $z_\lambda^-<z_\lambda<z_\lambda^+$, at which $z_\lambda=\lambda^{1/3}(1+ O(\lambda^{-1/3}))=\lambda^{1/3}+ O(1)$.

Letting $\sim$ denote equality up to a factor $(1\pm o_\lambda(1))$, 
$$
f_{\lambda,1}^{\circ \ell}(z_\lambda)\sim\begin{cases}
\lambda^{1/3},&\text{if $\ell$ is even;}\\
\lambda^{2/3},&\text{if $\ell$ is odd.}
\end{cases}
$$
By the chain rule,
$$
g'_\lambda(z)
=(f'_{\lambda,2}\circ f_{\lambda,1}^{\circ 2k}(z))
\times (f'_{\lambda,1}\circ f_{\lambda,1}^{\circ (2k-1)}(z))
\times \cdots \times (f'_{\lambda,1}\circ f_{\lambda,1}(z))
\times f'_{\lambda,1}(z).
$$
Noting
$$
f'_{\lambda,1}(z)=\frac{-\lambda}{(1+z)^2}\> \text{ and }\> f'_{\lambda,2}(z)=\frac{-2\lambda}{(1+z)^3},
$$
we have 
$$
g'_\lambda(z_\lambda)\sim-2 \times \underbrace{(-\lambda^{-1/3} \times -\lambda^{1/3})\times\cdots\times (-\lambda^{-1/3} \times -\lambda^{1/3})}_{\text{$k$ factors}}=-2.
$$
By continuity, there exists $\lambda\in\Rset_{>0}$ such that $|g'_\lambda(z_\lambda)|=1$.    Let this $\lambda$ be $\lambda_0$.  Then $z_{\lambda_0}$ is an indifferent fixed point of $g_{\lambda_0}$.

We now employ \cite[Lemma 13]{Buys}.   In our application, $\Delta=2$, and $H_\Delta=H_2$ is the set of all functions that can be obtained by composing sequences composed of $f_{\lambda,1}$ and $f_{\lambda,2}$.  The lemma asserts that the existence of an indifferent fixed point at $\lambda=\lambda_0$ (just established) ensures that $\lambda_0$ is an accumulation point of zeros realised by balanced trees with branching factors in $\{1,2\}$.  In our application, however, we want to ensure that pairs of degree-2 vertices remain separated, by $2k$ degree-1 vertices.  In fact, the trees $T\in H_2$ referred to in the statement of Buys' Lemma~13 do have this property.  However, we have to peek into the proof to see this.

The proof of the lemma first establishes that a certain function $g_\lambda^{\circ n}\circ h_\lambda$ is not \emph{normal} about $\lambda=\lambda_0$ (or more precisely that the family $\{\lambda\mapsto g_{\lambda}^{n}\circ h_{\lambda}(0)\}_{n\ge1}$ is not normal in any neighbourhood of $\lambda_{0}$).  The function $g_\lambda$ and the point $\lambda_0$ are the same as ours, so the issue for us is whether the $h_\lambda\in H_2$ has two occurrences of $f_{\lambda,2}$ that are too close together.  Consulting Remark~8, we discover that $h_\lambda$ is in fact an initial segment of the composition $f_{\lambda,2}\circ f_{\lambda,1}^{\circ 2k}$, so in fact contains at most one occurrence of $f_{\lambda,2}$.  Finally, at the end of the proof of Lemma 13, either one or two functions are removed from the composition of functions $g_\lambda^{\circ n}\circ h_\lambda$ to yield the desired function $\tilde g_\lambda$.  Thus, $\tilde g_\lambda$ is a composition of a contiguous subsequence of functions from 
$$
g_{\lambda}^{\circ(n+1)}=(f_{\lambda,2}\circ f_{\lambda,1}\circ\cdots\circ f_{\lambda,1})\circ (f_{\lambda,2}\circ f_{\lambda,1}\circ\cdots\circ f_{\lambda,1})\circ\cdots\circ(f_{\lambda,2}\circ f_{\lambda,1}\circ\cdots\circ f_{\lambda,1}).
$$
The corresponding tree has pairs of degree-2 vertices separated by sequences of $2k$ degree-1 vertices, as desired.
\end{proof}

\section{Concluding Remarks}

A few natural questions arise from this work. First, in Theorem~\ref{thm:univariate}, we establish an open, zero-free region $F$ containing $[0, \infty)$  for $S_{t,t,t}$-free graphs of maximum degree $\Delta$. Can we in fact take $F$ to be a sector of the complex plane with a suitably small angle (depending on $\Delta$ and $t$)?

Secondly, it would be interesting to know whether some form of Theorem~\ref{thm:almostclawfree} can be extended to all claw-free graphs of bounded degree (or bounded clique size). A more basic question is whether there is an open multivariate zero-free region containing $[0, \infty)$ for claw-free graphs of bounded degree (or bounded clique size).

\section*{Acknowledgments}
We thank Guus Regts and Ferenc Bencs for helpful discussions and feedback on an earlier draft. We also thank the anonymous referee for their careful reading and valuable feedback on this work.

\bibliographystyle{plain}
\bibliography{HardCoreZeros}

\section{Appendix}
\label{sec:appendix}

The line graph of a multigraph may be characterised as a graph~$G$ for which there exist vertex subsets $V_1,V_2,\ldots V_\ell\subseteq V(G)$ with the following properties: (i)~every induced graph $G[V_i]$ is a clique; (ii)~every edge $e\in E(G)$ is contained in some clique $G[V_i]$; and (iii)~every vertex $v\in V(G)$ is contained in at most two cliques.  

\begin{proof}[Proof of Theorem~\ref{thm:LRSmultivariate}]
Suppose $G$ is a line graph of a multigraph, whose maximum clique in the above decomposition has size $k_0$.  We show that $R(k_0^2/4)$ is a multivariate zero-free region for $G$.  
Note that $k_0$ is at most the size of the largest clique in~$G$, so this shows a little more than claimed in Theorem~\ref{thm:LRSmultivariate}.

We use the technique of Asano contractions, as employed in this context by Lebowitz, Ruelle and Speer.  The proof of our claim is essentially contained in the terse proof of their Lemma~2.6~\cite{LebowitzRuelleSpeer}.  Here, we sketch the line of proof, specialise it to our specific application, and explicitly compute the parabolic region.

The independence polynomial of the clique $K_k$ is simply $z_1+z_2+\cdots z_k+1$, and has $\{z:\Re z>-k^{-1}\}$ as a multivariate zero-free region.  View the line graph~$G$ as being constructed from a set of disjoint cliques by sequentially identifying pairs of vertices.  Each clique individually has the halfspace $\Phi=\{z:\Re z>-k_0^{-1}\}$ as a multivariate zero-free region.  Clearly, the disjoint union of cliques $G_0=G[V_1]\cupdot G[V_2]\cupdot\cdots\cupdot G[V_\ell]$ also has $\Phi$ as a multivariate zero-free region. A halfspace is a special case of a ``circular region'', so Asano contraction is applicable.  

Let the independence polynomial of $G_0$ be $p(z_1,\ldots,z_N)$.  Think of the zero-free region as being the cartesian product $\Phi_1\times\Phi_2\times\cdot\times\Phi_N$, where, initially, $\Phi_i=\Phi$ for all~$i$.  If $z_i\in\Phi_i$ for all~$i$ then $p(z_1,\ldots,z_N)\not=0$.  We can recover $G$ from~$G_0$ by repeatedly identifying pairs of vertices.  Suppose, without loss of generality, that we identify the vertices labelled by $z_{N-1}$ and~$z_N$ first, and assign variable~$z$ to the coalesced vertex.  The key contraction lemma (see \cite[Lemma~2.4]{LebowitzRuelleSpeer}) says that the resulting independence polynomial, in variables $(z_1,\ldots,z_{N-2},z)$, has a zero-free region $\Phi_1\times\Phi_2\times\cdot\times\Phi_{N-2}\times\Phi'$, where $\Phi_1,\Phi_2\ldots,\Phi_{N-2}=\Phi$ and $\Phi'$ is obtained from $\Phi_{N-1}$ and $\Phi_N$ according to a certain rule, namely
\begin{align*}
\Phi'=\overline{-\overline{\Phi_{N-1}}\cdot\overline{\Phi_N}}&=\overline{\{-w_1w_2:\text{$w_1\notin\Phi_{N-1}$ and $w_2\notin\Phi_N$}\}}\\&=\overline{\{-w_1w_2:\Re w_1,\Re w_2\leq-k_0^{-1}\}},
\end{align*}
where overline denotes complement of a set.  

Then, repeatedly applying Asano contractions we reach the target graph~$G$, and discover that it has $(\Phi')^n$ as a zero free set, where $n=|V(G)|$.  (It is here that we crucially use the fact that the original graph $G$ is a line graph.  As each vertex is contained in at most two cliques, we are always replacing two zero-free sets~$\Phi$ by a single~$\Phi'$.  If $G$ is the line graph of a \textit{multi}graph, then $G$ may contain parallel edges, but this is fortunately not an issue for the hard-core model.)  

It only remains to show that $\Phi'=R(k_0^2/4)$.  The set $\Phi'$ is open and we want to identify its boundary $\partial\Phi'$.  If either $\Re w_1<-k_0^{-1}$ or $\Re w_2<-k_0^{-1}$ then by making small perturbations of $w_1$ or $w_2$ (note that neither is zero) within $\overline\Phi$ we can move in a small open region around $w_1w_2$.  So, the boundary of $\Phi'$ must be defined by $w_1, w_2$ lying on the boundary of~$\Phi$.  Let $w_1=-k_0^{-1}+y_1\ii$ and $w_2=-k_0^{-1}+y_2\ii$, with $y_1,y_2\in\Rset$. Then 
$$
-w_1w_2=f(y_1,y_2)=(y_1y_2-k_0^{-2})+k_0^{-1}(y_1+y_2)\,\ii.
$$
Now,
$\partial f/\partial y_1= y_2+k_0^{-1}\ii$ and $\partial f/\partial y_2= y_1+k_0^{-1}\ii$, so unless $y_1=y_2$ we can again move in an open set around $f(y_1,y_2)$, and hence $f(y_1,y_2)$ cannot be on the boundary of $\Phi'$.  Setting $y=y_1=y_2$ we obtain the parametric expression $(y^2-k_0^{-2})+2k_0^{-1}y\,\ii$ for the boundary of $R(k_0^2/4)$ and hence $\Phi' = R(k_0^2/4)$.
\end{proof}

\end{document}